\theoremstyle{definition} \newtheorem{Definition}{Definition}[section]
\theoremstyle{plain} \newtheorem{Theorem}[Definition]{Theorem}
\theoremstyle{plain} \newtheorem{corollary}[Definition]{Corollary}
\theoremstyle{plain} \newtheorem{lemma}[Definition]{Lemma}
\theoremstyle{definition} \newtheorem{Remark}[Definition]{Remark}
\theoremstyle{plain}
\newtheorem{proposition}[Definition]{Propostion}
\theoremstyle{plain} 
\theoremstyle{plain} 
\theoremstyle{definition}
\theoremstyle{plain}
\newcommand{\gen}[1]{\langle #1 \rangle}
\newcommand{\R}{\mathcal{R}}
\newcommand{\id}{\mbox{id}}
\newcommand{\aut}[1]{\mathop{\mathrm{Aut}}({#1})}
\newcommand{\out}[1]{\mathop{\mathrm{Out}}({#1})}
\newcommand{\T}[1]{\mathcal{#1}}
\newcommand{\CCmr}[1]{\mathfrak{C}_{#1}}
\newcommand{\CCnr}{\mathfrak{C}_{n,r}}
\newcommand{\CCn}{\mathfrak{C}_{n}}
\newcommand{\Tnr}{T_{n,r}}
\newcommand{\CTnr}{\overline{\Tnr}}
\newcommand{\Gnr}{G_{n,r}}
\newcommand{\Rnr}{\mathcal{R}_{n,r}}
\newcommand{\Bnr}{\mathcal{B}_{n,r}}
\newcommand{\TBnr}{\T{T}\Bnr}
\newcommand{\WTBnr}{\widetilde{\T{T}\Bnr}}
\newcommand{\On}{\T{O}_{n}}
\newcommand{\Onr}{\T{O}_{n,r}}
\newcommand{\TOn}{\T{T}\On}
\newcommand{\TOnr}{\T{T}\Onr}
\newcommand{\WTOnr}{\widetilde{\T{TO}_{n,r}}}
\newcommand{\WTOn}{\widetilde{\TOn}}
\newcommand{\xn}{X_{n}}
\newcommand{\xns}{\xn^{*}}
\newcommand{\xnp}{\xn^{+}}
\newcommand{\xno}{X_n^{\omega}}
\newcommand{\pxnl}[1]{X_{n}^{#1}}
\newcommand{\xnr}{X_{n,r}}
\newcommand{\xnro}{\xnr^{\omega}}
\newcommand{\xnrs}{\xnr^{*}}
\newcommand{\xnrp}{\xnr^{+}}
\newcommand{\pxnrl}[1]{X_{n,r}^{#1}}
\newcommand{\core}{\mathrm{Core}}
\newcommand{\ac}[1]{{\bf{\bar{#1}}}}
\newcommand{\lelex}{<_{\mbox{lex}}}
\newcommand{\Z}{\mathbb{Z}}
\newcommand{\N}{\mathbb{N}}
\newcommand{\sym}{\mathmbox{Sym}}
\newcommand{\dotr}{{\bf{\dot{r}}}}
\renewcommand{\restriction}{\mathord{\upharpoonright}}
\renewcommand*{\eqref}[1]{%
  \hyperref[{#1}]{\textup{\tagform@{\ref*{#1}}}}%
}
\begin{document}
 \author{
 		Olukoya, Feyishayo\\
 		Department of Mathematics,\\
 		University of Aberdeen, \\ 
 		Fraser Noble Building,\\ 
 		Aberdeen,\\
 		\texttt{feyisayo.olukoya@abdn.ac.uk}
 	}

\title{  Automorphism towers of groups of homeomorphisms of Cantor space}
\maketitle
\begin{abstract}
We show that for any full and sufficiently transitive (i.e. \textit{flexible}) group  $G$ of homeomorphisms of Cantor space, $\aut{\aut{G}} = \aut{G}$. This class contains  many generalisations of the Higman-Thompson groups $\Gnr$,  and the Rational group $\mathcal{R}_{2}$ of Grigorchuk, Nekrashevych, and Suchanski{\u \i}. We also demonstrate that for generalisations $\Tnr$ of  R. Thompson's group $T$, $\aut{\aut{\Tnr}}= \aut{\Tnr}$. In the case of the groups $\Gnr$ and $\Tnr$ our results extend results of Brin and Guzm{\' a}n for Thompson's group $T$, and generalisations of Thompson's group $F$. 
\end{abstract}
\section{Introduction}

In this article we prove the following result:

\begin{Theorem}\label{Theorem:mainintro}
	Let $G$ be a full and flexible group of homeomorphisms of Cantor space. Then $\aut{\aut{G}} = \aut{G}$.
\end{Theorem}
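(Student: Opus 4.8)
The plan is to realise $\aut{G}$ concretely as a group of homeomorphisms of Cantor space $\mathfrak{C}$ which is again full and flexible and, crucially, is its own normaliser inside $\mathrm{Homeo}(\mathfrak{C})$; the theorem then falls out by applying the same realisation a second time. Throughout write $H=\mathrm{Homeo}(\mathfrak{C})$ and $N=N_{H}(G)$ for the normaliser of $G$ in $H$. The target statement is really the assertion that $\aut{G}$ is a \emph{complete} group (centreless and with only inner automorphisms), so the work splits into: (i) identifying $\aut{G}$ with $N$; (ii) checking $N$ is full and flexible; and (iii) proving $N_{H}(N)=N$.

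\emph{Step 1: spatial realisation of $\aut{G}$.} Flexibility is tailored so that the action of $G$ on $\mathfrak{C}$ satisfies the hypotheses of a reconstruction theorem of Rubin type: the action is ``locally moving'' in the sense that it has no global fixed structure and for every nonempty clopen $U$ there are elements of $G$ supported in $U$ with sufficiently rich dynamics on $U$. Consequently every isomorphism from $G$ onto another such group of homeomorphisms is spatially implemented by a homeomorphism of the underlying Cantor spaces; applied to automorphisms of $G$ this shows each $\phi\in\aut{G}$ is conjugation by some $h_{\phi}\in H$. Flexibility also forces $C_{H}(G)=1$, so $h_{\phi}$ is unique and $\phi\mapsto h_{\phi}$ is an isomorphism $\aut{G}\xrightarrow{\ \sim\ } N$ carrying $\mathrm{Inn}(G)$ to $G$; in particular $G\le N\le H$.

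\emph{Step 2: $N$ is full and flexible.} Flexibility is a transitivity condition on the action, so it is inherited by every subgroup of $H$ containing $G$; thus $N$ is flexible and $C_{H}(N)\le C_{H}(G)=1$. For fullness one must verify that $N$ is closed under the ``cut and paste'' operation defining full groups, i.e.\ that a homeomorphism agreeing, on some clopen partition, piecewise with elements of $N$ lies in $N$. The point is that every element of $N$ has, at each point, a germ lying in the (\'etale) groupoid generated by $G$ — conjugating $G$ by an element of $N$ cannot enlarge this groupoid — so $N$ is contained in the full group of that groupoid, which coincides with the full group determined by $G$; carrying this identification through shows $N$ is full. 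Hence Step~1 applies verbatim to $N$, giving $\aut{N}\cong N_{H}(N)$.

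\emph{Step 3: self-normalisation, and the main obstacle.} Let $D$ be the term at which the derived series of $G$ stabilises; for full flexible groups $D$ is simple and is the unique minimal normal subgroup of $G$. As $D$ is characteristic in $G$ and $G\trianglelefteq N$, it is normal in $N$, still minimal normal there, and any further minimal normal subgroup of $N$ would centralise $D$, contradicting $C_{H}(D)=1$; hence $D$ is characteristic in $N$. Therefore any $h\in H$ normalising $N$ normalises $D$, and — since $G$ is recovered from $D$ as the full group of the groupoid generated by $D$, equivalently $N_{H}(D)=N_{H}(G)$ — such $h$ normalises $G$, i.e.\ $h\in N$. Thus $N_{H}(N)=N$, and combined with Step~2, $\aut{\aut{G}}\cong\aut{N}\cong N_{H}(N)=N=\mathrm{Inn}(N)\cong\aut{G}$, with the isomorphisms compatible with the canonical maps, which is exactly $\aut{\aut{G}}=\aut{G}$. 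The delicate points are the last two: that passing to the normaliser does not change the ambient groupoid (keeping $N$ full) and that $G$ is rigidly pinned down by its monolith $D$ so that $N_{H}(D)=N_{H}(G)$. These are where flexibility rather than mere fullness is genuinely needed, and where the argument must be arranged so as to cover $\Gnr$, $\Tnr$ and $\R_{2}$ uniformly.
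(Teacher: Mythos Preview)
Your Step~1 is fine and matches the paper. The real problem is Step~2. You assert that ``every element of $N$ has, at each point, a germ lying in the (\'etale) groupoid generated by $G$'' and hence that $N$ is contained in the full group of that groupoid. But since $G$ is already full, the full group of the germ groupoid of $G$ \emph{is} $G$; your argument therefore proves $N\subseteq G$, i.e.\ $\out{G}=1$. This is false for most of the target examples: for $G=\Gnr$ with $n>2$ the outer automorphism group is non-trivial, and the extra elements of $N=\Bnr$ are bisynchronizing transducers with non-trivial core, whose germs are genuinely not germs of prefix-replacement maps. So the fullness argument for $N$ collapses, and with it the claim that Step~1 can be re-applied verbatim to $N$. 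Your Step~3 then compounds the issue: the identity $N_{H}(D)=N_{H}(G)$ is justified by ``$G$ is the full group of the groupoid generated by $D$'', which is the same move and needs the same missing ingredient. (Separately, $\Tnr$ is not full, so it cannot be covered ``uniformly'' by this scheme; the paper treats it by an entirely different germ computation.)

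The paper avoids all of this by never claiming $N$ is full. It only needs that $N$ inherits the Rubin hypothesis from $G$ (trivial, since $G\le N$), so that $\aut{N}\cong N_{H}(N)$. The substitute for your groupoid/monolith step is a short direct lemma: \emph{any element of $N$ with small support already lies in $G$}. This is proved in a few lines using flexibility (to carry an arbitrary point into the clopen set where the element is the identity) and fullness of $G$. Coupled with the elementary fact that a full group on Cantor space is generated by its small-support elements, one gets immediately that any $h$ normalising $N$ sends small-support elements of $G$ to small-support elements of $N\subseteq G$, hence $h\in N_{H}(G)=N$. That is the whole argument; no \'etale groupoids, no monolith, and no need for $N$ itself to be full.
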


We note that a group $G$ of homeomorphisms of a topological space $X$ is full if any homeomorphism $h$ of $X$ that agrees with $G$ locally is in fact an element of $G$. The flexibility condition is a transitivity condition.

The class of full and flexible groups of homeomorphisms of Cantor space contains many groups of interest: Thompson's group $V$, and the Higman-Thompson groups $G_{n,r}$ generalising $V$; the rational group $\mathcal{R}_{2}$; the R{\"o}ver group \cite{Rover_99} a simple overgroup of $V$ and the Nekrashevych groups $V_{n}(G)$  \cite{Nek_selfsimilargroups} (overgroups of $G_{n,1}$) generalising the R{\"o}ver group, the groups $V_{n}(T)$, for $T$ a \textit{partially invertible} transducer, introduced in the article \cite{DonovenOlukoya}. 

Therefore as a Corollary of Theorem~\ref{Theorem:mainintro}, we have:

\begin{corollary}
	The following groups G all have the property $\aut{\aut{G}}=\aut{G}$.
	\begin{enumerate}
		
		\item The rational group $\T{R}_2$,
		\item The Higman‐Thompson groups $\Gnr$,
		\item The Nekrashevych groups $V_{n}(G)$,
		\item The groups $V_{n}(T)$ of the article \cite{DonovenOlukoya}.
		
	\end{enumerate}
\end{corollary}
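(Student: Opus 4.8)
The plan is to derive the Corollary directly from Theorem~\ref{Theorem:mainintro}, so the only real content is verifying that each of the four families of groups is \emph{full and flexible} as a group of homeomorphisms of Cantor space; once that is checked, the conclusion $\aut{\aut{G}} = \aut{G}$ is immediate. I would organise the argument as four short verifications, one per item, each of the form ``$G$ acts faithfully by homeomorphisms on a Cantor space $X$, $G$ is full in the sense of the Introduction, and $G$ satisfies the flexibility (strong transitivity) condition''.

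First, for the Higman--Thompson groups $\Gnr$ (item 2): here $X = \xnr$ is the space of infinite words, $\Gnr$ acts by prefix-replacement homeomorphisms, and fullness is essentially the defining feature of $V$-type groups --- any homeomorphism that is locally given by prefix substitutions from $\Gnr$ is itself such a substitution, because one can extract a common refinement of the two clopen partitions witnessing the local agreement and read off a single table. Flexibility amounts to the standard fact that $\Gnr$ acts highly transitively on the clopen sets of a fixed ``address type'', which is built into the prefix-replacement formalism. For the rational group $\T{R}_2$ (item 1), the underlying space is $\xns[2]$ (binary Cantor space) and elements are the homeomorphisms induced by finite invertible transducers; fullness follows because the class of transducer-induced maps is closed under the patching operation (a homeomorphism locally equal to transducer maps is realised by a transducer obtained by gluing the relevant state sets), and flexibility holds since $\T{R}_2 \supseteq V = G_{2,1}$ and transitivity is inherited from any full subgroup that is already flexible.

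For the Nekrashevych groups $V_{n}(G)$ (item 3) and the groups $V_{n}(T)$ of \cite{DonovenOlukoya} (item 4), I would note that each contains a copy of $G_{n,1}$ (respectively $G_{n,r}$) acting on the same Cantor space, so flexibility is inherited verbatim from the Higman--Thompson case --- flexibility is a property of the action that only gets easier to satisfy as the group grows, provided the ambient space is unchanged. Fullness is then the one genuine point to check: one must verify that gluing together finitely many maps, each of which is locally of the ``prefix substitution composed with a synchronizing/partially-invertible transducer'' form, again yields a map of that form. This is exactly the kind of closure-under-patching statement proved (or used) in \cite{Nek_selfsimilargroups} and \cite{DonovenOlukoya}, so I would cite those sources for the transducer bookkeeping rather than reprove it.

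The main obstacle is precisely this last verification of fullness for the transducer-based groups $\T{R}_2$, $V_n(G)$, $V_n(T)$: unlike the purely combinatorial $\Gnr$ case, here ``locally agrees with $G$'' must be upgraded to ``globally equals an element of $G$'' by actually constructing the witnessing transducer from a finite clopen cover, keeping track of minimality/synchronization (for $\T{R}_2$) or partial invertibility (for $V_n(T)$). I expect this to be routine given the machinery of the cited papers but slightly delicate to state cleanly; everything else --- faithfulness, the identification of the Cantor space, and flexibility --- is either standard or inherited from the $\Gnr$ subgroup, and the final invocation of Theorem~\ref{Theorem:mainintro} is a one-line deduction.
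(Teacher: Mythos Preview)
Your approach is correct and matches the paper's: the corollary is an immediate consequence of Theorem~\ref{Theorem:mainintro} once one knows each listed group is full and flexible, which the paper simply asserts in a Remark (citing \cite{Rover_99,Nek_selfsimilargroups}) without further argument. Your proposal actually supplies more detail than the paper does on the verification of fullness and flexibility, and you correctly identify the only non-trivial ingredient---closure under patching for the transducer-based groups---as something to be sourced from the literature rather than reproved.
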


The groups $\T{R}_{n}$ of rational homeomorphisms of Cantor spaces $\CCn$ are introduced in the article  \cite{GriNekSus} of Grigorchuk, Nekreshevych and Suchanski{\u \i}. These consists of homeomorphisms which have finitely many `local actions' and so are homeomorphisms induced by finite state machines called transducers. It is a result of \cite{GriNekSus} that $\R_{n} \cong \R_{2}$ for all $n$. We note that it remains open whether or not $\aut{\T{R}_{2}} = \T{R}_{2}$.

The Thompson groups $F, T$ and $V$ were introduced by R. J. Thompson in the  1960's in connection to questions in logic \cite{ThompsonNotes}. The groups $T$ and $V$ were the first examples of finitely presented infinite simple groups. Higman \cite{Higmanfpsg} generalised $V$ to an infinite family $G_{n,r}$ of groups which are either simple ($n$ is even) or have a simple derived subgroup. The automorphisms of the groups $F$ and $T$ were analysed  in the seminal paper \cite{MBrin2}. In that paper Brin characterises the automorphisms of Thompson's group $F$ and $T$. The follow-up paper \cite{MBrinFGuzman} analyses automorphisms of generalisations of the groups $F$ and $T$ including the groups $F_n$ and $T_{n}$. In \cite{MBrinFGuzman} it is shown that the automorphism towers of the groups $F_{n}$ all have height $1$. However, the techniques used in analysing the automorphisms of these groups do not extend to analyse automorphisms of the groups $\Tnr$, when $r$ is not equal to $n-1,$ and $\Gnr$ for all valid $n$ and $r$.

The recent papers \cite{AutGnr}   and \cite{OlukoyaAutTnr} address this gap. The paper \cite{AutGnr} shows that the group of automorphisms  $\Gnr$ is a subgroup of the group of \emph{rational homeomorphisms} of the Cantor space $\CCnr$ and the paper \cite{OlukoyaAutTnr} extends this result to the group $\Tnr$. 

Although the group $\Tnr$ is a flexible group of homeomorphisms of Cantor space, it is not a full group. However, we are again able to prove that Theorem~\ref{Theorem:mainintro} holds for the group $\Tnr$:

\begin{Theorem}\label{Theorem:AutTnr}
	For the group $\Tnr$, $\aut{\aut{\Tnr}} = \aut{\Tnr}$.
\end{Theorem}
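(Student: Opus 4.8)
The plan is to prove that $\aut{\Tnr}$ is a \emph{complete} group, i.e.\ that it has trivial centre and no outer automorphisms; since the canonical homomorphism $\aut{\Tnr}\to\aut{\aut{\Tnr}}$ is injective exactly when the centre of $\aut{\Tnr}$ is trivial and surjective exactly when $\aut{\Tnr}$ has no outer automorphisms, completeness of $\aut{\Tnr}$ is the same statement as Theorem~\ref{Theorem:AutTnr}. Theorem~\ref{Theorem:mainintro} cannot be quoted directly here: although $\Tnr$ is flexible it is not full, because preservation of the natural cyclic order on $\CCnr$ is not a local condition. The role played by fullness in the full case will instead be played by the results of \cite{OlukoyaAutTnr}, which realise $\aut{\Tnr}$ as a group of rational homeomorphisms of $\CCnr$ in which $\mathrm{inn}(\Tnr)$ is the standard copy of $\Tnr$; I work throughout in this realisation, so that each element of $\aut{\Tnr}$ is a homeomorphism of $\CCnr$ that normalises $\Tnr$.

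\emph{Triviality of the centre.} An element of $\aut{\Tnr}$ centralising $\mathrm{inn}(\Tnr)$ is then a homeomorphism of $\CCnr$ commuting with every element of $\Tnr$, and flexibility of $\Tnr$ forces the centraliser of $\Tnr$ in $\mathrm{Homeo}(\CCnr)$ to be trivial by the usual support argument. Hence $C_{\aut{\Tnr}}\bigl(\mathrm{inn}(\Tnr)\bigr)=Z\bigl(\aut{\Tnr}\bigr)=1$, so the conjugation action of $\aut{\Tnr}$ on its normal subgroup $\mathrm{inn}(\Tnr)$ is faithful.

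\emph{Every automorphism is inner.} The key assertion is that $\mathrm{inn}(\Tnr)$ is characteristic in $\aut{\Tnr}$. Granting it, take $\Phi\in\aut{\aut{\Tnr}}$: then $\Phi$ restricts to an automorphism of $\mathrm{inn}(\Tnr)\cong\Tnr$, so it agrees on that subgroup with some $\phi\in\aut{\Tnr}$, and after composing $\Phi$ with conjugation by $\phi^{-1}$ inside $\aut{\Tnr}$ we may assume $\Phi$ restricts to the identity on $\mathrm{inn}(\Tnr)$. Writing $c_f$ for conjugation by $f\in\Tnr$ and $g\cdot f$ for the image of $f$ under $g\in\aut{\Tnr}$, for all $g$ and $f$
\[
c_{\,\Phi(g)\cdot f}=\Phi(g)\,c_f\,\Phi(g)^{-1}=\Phi\!\bigl(g\,c_f\,g^{-1}\bigr)=\Phi\!\bigl(c_{\,g\cdot f}\bigr)=c_{\,g\cdot f},
\]
and, $\Tnr$ being centreless, this gives $\Phi(g)\cdot f=g\cdot f$ for every $f$, hence $\Phi(g)=g$. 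Thus $\Phi$ is inner and $\out{\aut{\Tnr}}=1$.

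\emph{The key assertion, and the main obstacle.} To establish that $\mathrm{inn}(\Tnr)$ is characteristic I would first identify the unique minimal normal subgroup of $\aut{\Tnr}$. Any non-trivial $M\trianglelefteq\aut{\Tnr}$ meets $\mathrm{inn}(\Tnr)$ non-trivially (otherwise $M\le C_{\aut{\Tnr}}(\mathrm{inn}(\Tnr))=1$), and $M\cap\mathrm{inn}(\Tnr)$, being a non-trivial normal subgroup of $\mathrm{inn}(\Tnr)\cong\Tnr$, contains the image of the simple derived subgroup $\Tnr'$ --- using simplicity of $\Tnr'$ and triviality of $C_{\mathrm{Homeo}(\CCnr)}(\Tnr')$, again from flexibility --- while a second minimal normal subgroup would centralise the first, contradicting triviality of the centraliser. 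So $\mathrm{inn}(\Tnr')$ is characteristic in $\aut{\Tnr}$, and when $\Tnr$ is simple this already is the key assertion. In general one must recover $\mathrm{inn}(\Tnr)$ from $\mathrm{inn}(\Tnr')$, and this is where the failure of fullness genuinely bites: it amounts to characterising $\Tnr$ among the rational homeomorphisms normalising $\Tnr'$ using the cyclic-order structure of $\CCnr$ together with the explicit description of $\aut{\Tnr}$ in \cite{OlukoyaAutTnr}, and adapting the reconstruction techniques of \cite{AutGnr} from the full group $\Gnr$ to the cyclically ordered setting --- equivalently, showing that $\aut{\Tnr}$ is self-normalising inside the ambient group of rational homeomorphisms, so that the extra automorphisms already detected in \cite{OlukoyaAutTnr} are all accounted for within $\aut{\Tnr}$ itself. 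I expect this last step to be the main difficulty, precisely because it is the part that cannot be outsourced to Theorem~\ref{Theorem:mainintro}.
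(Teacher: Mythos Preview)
Your reduction is sound: showing that $\mathrm{inn}(\Tnr)$ is characteristic in $\aut{\Tnr}$, together with the (correct) centraliser argument, does yield completeness of $\aut{\Tnr}$. The identification of $\mathrm{inn}(\Tnr')$ as the unique minimal normal subgroup is also fine. But the proposal is not a proof: you explicitly leave open the passage from $\Tnr'$ to $\Tnr$ in the non-simple case, and the suggestions you make for closing it (``self-normalising inside the ambient group of rational homeomorphisms'', ``adapting the reconstruction techniques'') are programmatic rather than arguments. That step is exactly the content of the theorem, so as it stands the proposal is a correct outline with its main lemma missing.

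The paper does not attempt an abstract group-theoretic characterisation of $\Tnr$ inside $\aut{\Tnr}$. Instead it moves to the circle: $\aut{\Tnr}\cong\TBnr$ acts on $S_r$, and the McCleary--Rubin theorem gives $\aut{\TBnr}\cong N_{H(S_r)}(\TBnr)$. The substantive step is then dynamical rather than algebraic. Lemma~\ref{Lemma:germs} computes the germ group $\mathscr{TB}_{x,x}$ of $\WTBnr$ at each fixed point $x$ and shows it factors as $\WTOnr^{x}$ (the ``core'' part) times a $\Z$-piece recording local slopes; the subgroup $\mathscr{T}_{x,x}$ of $\Tnr$-germs is precisely the trivial-core part. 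Corollary~\ref{cor:normaliserpresevesTnr} then shows that any $h\in H(S_r)$ with $h^{-1}\TBnr h\subseteq\TBnr$ must send trivial-core germs to trivial-core germs, hence $h^{-1}\Tnr h\subseteq\Tnr$; since $\Tnr$ is generated by elements of small support (each of which fixes an $n$-adic rational), this suffices. In effect the paper proves directly that $\Tnr$ is characteristic in $\TBnr$ by exhibiting a conjugation-invariant local description of $\Tnr$-elements, bypassing the $\Tnr'$-to-$\Tnr$ problem entirely. If you want to complete your route, this germ/core invariant is the missing ingredient; there does not seem to be a purely algebraic shortcut that avoids it.
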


We say a few words about the proof.

 Let $G$ be full and flexible group of homeomorphisms of  Cantor  space $\T{X}$. We prove that $G$ satisfies the conditions of Rubin's Theorem \cite{Rubin} and so $\aut{G}$ is isomorphic to the normaliser of $G$ in the group of homeomorphisms of $\T{X}$. It then follows that  $\aut{\aut{G}}$ also satisfies the hypothesis of Rubin's theorem and is isomorphic to the normaliser of $\aut{G}$ in the group of homeomorphisms of $\T{X}$. At this stage the proof breaks up into two parts. Firstly, we show that an element of $\aut{G}$ which acts as the identity on a non-empty  open subset of $\T{X}$, must in fact be an element of $G$.  The second step is to observe that any full group of homeomorphisms of Cantor space  is generated by elements which act as the identity on a non-empty open subset. Since such elements are preserved by conjugation, it follows, as $\aut{\aut{G}}$ is the normaliser of $\aut{G}$ in the group of homeomorphisms of $\T{X}$, that any element of $\aut{\aut{G}}$ induces an automorphism of $G$.
 
 As noted above, the group $\Tnr$ is a flexible but not full group of homeomorphisms of Cantor space. Therefore we take a different approach to handle this case. Our approach here is similar to those  in the articles \cite{MBrin2, MBrinFGuzman} and involves a characterisation of the germs of elements of  $\aut{\Tnr}$.

 The article is organised in the following manner. In Section~\ref{Section:autautcantor} we prove Theorem~\ref{Theorem:mainintro}. In Section~\ref{Section:autautTnr} we collect the relevant results and definitions from \cite{AutGnr,OlukoyaAutTnr} and we  prove Theorem~\ref{Theorem:AutTnr}.
 
 \section*{Acknowledgements}
 The author is grateful to Prof. Nekrashevych and Dr. Bleak for helpful discussions and to comments of anonymous referees on early drafts of the manuscript. The author gratefully acknowledges the support of  Leverhulme Trust Research Project Grant RPG-2017-159.

\section{Automorphism towers of full groups}\label{Section:autautcantor}
In this section we show that the automorphism towers of full groups of homeomorphisms of Cantor space have height at most one. This class includes the  generalisations  $\Gnr$ of  Thompson's group  $V$, and the  rational group $\R_{2}$. We begin by first setting up some general notation and conventions, then we define the class of groups we are concerned with and  we prove the main result.

\subsection{General notation and definitions}

For $k \in \N$ write $\N_{k}$ for the set of natural numbers bigger than or equal to $k$.

Let $\T{X}$ be a topological space. We denote by $H(\T{X})$ the group of self-homeomorphisms of $\T{X}$. For a subgroup $G \le H(\T{X})$ we denote by $N_{H(\T{X})}(G)$ the normaliser of $G$ in $H(\T{X})$.

We write functions to the right of their arguments. In keeping with this convention, for a group $G$ and elements $g,h \in G$, we write $g^h$ for the product $h^{-1}gh$.

\begin{Definition}
Let $\T{X}$ be a topological space and $G \le H(\T{X})$. Let $h \in H(\T{X})$, then $h$ is said to \emph{locally agree with $G$} if for every point  $x \in \T{X}$ there is an open neighbourhood $U$ of  $x$ and an element $g \in G$ such that $h \restriction_{U} = g\restriction_{U}$. The group $G$ is said to be \emph{full} if every element of  $H(\T{X})$ which locally agrees with $G$ is in fact an element of $G$.
\end{Definition}

\begin{Definition}
	Let $G$ be a group acting by homeomorphisms on a compact Hausdorff space $\T{X}$. Then $G$ is called \emph{flexible} if for any pair $E_1, E_2$ of proper closed sets with non-empty interior, there is a $g \in G$ with  $(E_1)g \subseteq E_2$. 
\end{Definition}

\begin{Remark}
The groups  $\Gnr$ and $\T{R}_{2}$ are  full and flexible groups of homeomorphism of Cantor space. The R{\"o}ver group \cite{Rover_99} and the Nekrashevych groups \cite{Nek_selfsimilargroups} $V_{n}(G)$ generalising the R{\"o}ver group are full and flexible groups of homeomorphism of Cantor space. 
\end{Remark}

\begin{Definition}
	Let $G$ be a group of homeomorphisms of a topological space $\T{X}$ and let $g \in G$. The \textit{support} of $g$ is the closure of the set of points $x \in \T{X}$ such that $(x)g \ne x$.
\end{Definition}

\begin{Definition}
	Let $G$ be a group acting by homeomorphisms on a compact Hausdorff space $\T{X}$. An element  $g \in G$ is said to have \textit{small support} if there is a proper, closed subset $U$ of $\T{X}$ such that  $g \restriction_{\T{X}\backslash U}$ is the identity map on  $\T{X}\backslash U$.  
\end{Definition}

\subsection{Proofs}

Throughout this section \textbf{$\T{X}$ denotes  Cantor space}.

\begin{lemma}\label{Lemma:Tnrgeneratedbyelementsofsmallsupport}
Let $G$ be a full group of homeomorphisms of $\T{X}$.  Then $G$ is generated by its elements of small support.
\end{lemma}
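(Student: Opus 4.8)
The plan is to exploit the standard fact that Cantor space has a basis of clopen sets, together with the definition of fullness, to ``localise'' an arbitrary element of $G$. Let $g \in G$ be given. Because $\T{X}$ is a compact Hausdorff space with a basis of clopen sets, and $g$ is a homeomorphism, I can find a finite clopen partition $\T{X} = U_1 \sqcup U_2 \sqcup \cdots \sqcup U_k$ such that each $(U_i)g$ is again one of a finite clopen partition of $\T{X}$; shrinking the partition if necessary, I may in fact assume $k \ge 3$ and that each $U_i$ is a proper clopen subset (this uses that $\T{X}$ has no isolated points). The idea is then to write $g$ as a product of elements each of which moves only ``part'' of the space, i.e.\ each of which is the identity on some non-empty clopen set and hence has small support.

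The key step is a swapping/factorisation argument. Given the partition above, I would argue by induction on $k$ that $g$ can be written as a product of elements of small support. For the inductive step, pick two distinct blocks, say $U_1$ and $U_2$, with $(U_1)g \ne U_1$ (if no such block exists then $g$ already stabilises every block of a proper clopen partition and is a product of elements of small support in an obvious way). Using flexibility (or just directly, since both $(U_1)g$ and some convenient target clopen set are proper clopen sets homeomorphic to $\T{X}$), choose an auxiliary homeomorphism $h$ supported on a proper clopen set that ``realigns'' $(U_1)g$ back over $U_1$; the crucial point is that $h$ must be checked to lie in $G$. Here I invoke fullness: $h$ will be built piecewise from homeomorphisms between clopen subsets of $\T{X}$, and since $G$ is flexible it contains elements realising such clopen identifications, so $h$ locally agrees with $G$ and hence $h \in G$. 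Then $gh$ fixes $U_1$ setwise, so $gh = a \cdot b$ where $a$ is the identity on $\T{X} \setminus U_1$ (hence small support) and $b$ is the identity on $U_1$ (hence small support), and both lie in $G$ by fullness again; unwinding, $g = a b h^{-1}$ is a product of three elements of small support.

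Alternatively, and perhaps more cleanly, I would avoid the induction: fix any proper non-empty clopen set $U$ with $(U)g \cap U = \emptyset$ is not forced, so instead take a proper clopen $U$ and a clopen $V$ disjoint from both $U$ and $(U^c)g^{-1}$ if one can be arranged, then use a single ``conjugator'' supported off a clopen set to separate the action of $g$ on $U$ from its action on $U^c$. The essential mechanism in every version is the same: fullness lets me freely cut $g$ into clopen pieces and reassemble the pieces into honest elements of $G$ each missing a clopen set, and flexibility supplies whatever auxiliary rearranging homeomorphisms are needed and guarantees they, too, are in $G$.

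The main obstacle I anticipate is bookkeeping rather than conceptual: one must be careful that when $g$ is decomposed as a product of locally-$G$ homeomorphisms, each factor genuinely is the identity on a \emph{proper, closed} (equivalently, non-empty clopen) subset and not merely on some meagre or non-open set, and that the auxiliary homeomorphisms used to realign images are themselves elements of $G$ rather than arbitrary homeomorphisms of $\T{X}$. This is exactly where fullness and flexibility must be invoked in tandem, and stating the clopen-partition reduction precisely — so that all the pieces fit together into a finite product — is the part of the argument requiring the most care.
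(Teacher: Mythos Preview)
Your sketch leans essentially on flexibility, but the lemma only assumes that $G$ is full. You write that ``since $G$ is flexible it contains elements realising such clopen identifications, so $h$ locally agrees with $G$ and hence $h \in G$''; without flexibility this step has no content, because fullness by itself gives you nothing to glue unless the pieces you want to glue are already restrictions of elements of $G$. Even if you allowed yourself flexibility, the definition you quote only produces an element sending one proper closed set \emph{into} another, not a bijection between two prescribed clopen sets, so your auxiliary $h$ with $(U_1)gh = U_1$ is still not constructed. The partition/induction scaffolding is therefore not doing work; the genuine difficulty---manufacturing a small-support element of $G$ out of thin air---is exactly the point you have deferred.

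The paper sidesteps all of this by building the auxiliary element out of $g$ itself. Pick a clopen $E$ small enough that $E \cap (E)g = \emptyset$ and $E \cup (E)g \ne \T{X}$ (possible because $g$ moves a point and $\T{X}$ has a basis of small clopen sets). Define $h$ to equal $g$ on $E$, $g^{-1}$ on $(E)g$, and the identity elsewhere; this is a homeomorphism that locally agrees with $\{g, g^{-1}, \id\} \subseteq G$, hence $h \in G$ by fullness alone. Now $gh$ is the identity on $E$ and $h^{-1}$ is the identity off $E \cup (E)g$, so $g = (gh)h^{-1}$ is a product of two elements of small support. No flexibility, no partition, no induction.
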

\begin{proof}

We first note that for any pair $x,y \in \T{X}$, there is a proper clopen set $U \subseteq \T{X}$ containing $x,y$.

Let $g \in G$ be any non-identity element. Then, since $g$ moves a point, by the observation above, there is a non-empty clopen set $E$ of $\T{X}$ such that $E \cap (E)g = \emptyset$ and $E \cup (E)g$  is not equal to  $\T{X}$. Define $h \in H(\T{X})$ such that $h$ agrees with $g$ on $E$, with $g^{-1}$ on   $(E)g$, and agrees with the identity map on $\T{X} - (E \cup (E)g)$. Since $G$ is full, $h \in G$. Moreover,  as $gh$ acts trivially on $E$ and $h^{-1}$ acts trivially on the complement of $E \cup (E)g$, then $g = (gh)h^{-1}$ is a product of elements of small support.

\end{proof}

The author is grateful for a comment of an anonymous referee for the following lemma.

\begin{lemma}\label{Lemma:elementofTbnrwhichactsastheindentityonaclosedsubsetisinTnr}
Let $G$ be a full and flexible group of homeomorphisms of $\T{X}$. Then any element of $N_{H(\T{X})}(G)$ which is an element of small support is an element of $G$. 
\end{lemma}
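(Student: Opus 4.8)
The plan is to show that an element $h \in N_{H(\T{X})}(G)$ of small support must already lie in $G$, by exploiting the fullness of $G$ together with the flexibility condition to ``transport'' the action of $h$ into a genuine element of $G$ and then patch. First I would fix a proper closed set $U$ with non-empty interior such that $h$ restricted to $\T{X} \setminus U$ is the identity; since $\T{X}$ is Cantor space I may as well shrink and enlarge to assume $U$ is clopen (the support of $h$ is contained in a proper clopen set, because the complement of $U$ has non-empty interior and $h$ is the identity there). So $h$ is supported on a proper clopen set $E$, meaning $h$ acts as the identity on the non-empty clopen complement $\T{X} \setminus E$.

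The key idea is to use flexibility to find $g \in G$ carrying $E$ (or a slightly larger clopen set containing $E$) into a small clopen set, so that $g^{-1} h g$ — which is again in $N_{H(\T{X})}(G)$ since $h$ is — is supported inside that small clopen set. Then I would play $h$ against a conjugate of itself by an element of $G$ whose support is disjoint from $E$: concretely, pick $k \in G$ with $(E)k \cap E = \emptyset$ and $(E)k \cup E$ still a proper subset of $\T{X}$ (this exists by the first observation in the proof of Lemma~\ref{Lemma:Tnrgeneratedbyelementsofsmallsupport} applied suitably, or directly by flexibility). Then $h$ and $h^k$ have disjoint supports, hence commute, and more importantly one can build a homeomorphism that locally agrees with conjugates of $h$ and with the identity. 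The cleanest route: define $h' \in H(\T{X})$ to agree with $h$ on $E$ and with the identity off $E$ — but that is just $h$ itself. So instead the argument must extract membership in $G$ from the normaliser hypothesis, not fullness alone.

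The right approach, I expect, is this. Because $h$ normalises $G$ and is supported on the proper clopen set $E$, for every $g \in G$ supported inside $\T{X} \setminus E$ we get $g^h = g$, i.e. $h$ centralises all elements of $G$ supported off $E$. Using flexibility, one shows $G$ restricted to (the rigid subgroup of homeomorphisms supported on) $\T{X}\setminus E$ is itself full and flexible on $\T{X} \setminus E$, and similarly on $E$; and that $G$ contains ``enough'' such elements. Now take $g \in G$ with $(E)g \subseteq \T{X} \setminus E$ (flexibility, since $E$ is a proper closed set with non-empty interior and so is $\T{X}\setminus E$); then $g^{-1}hg$ is supported inside $(E)g \subseteq \T{X}\setminus E$, so it commutes with... and more usefully, $h (g^{-1} h^{-1} g)$ is a homeomorphism which agrees with $h$ on $E$ (where $g^{-1}h^{-1}g$ is trivial, since that conjugate is supported off $E$) and with $g^{-1}h^{-1}g$ off... no. Let me instead directly build the required $G$-element: set $\tilde h := h \cdot (h^{g})^{-1}$ where $g \in G$ moves $E$ off itself; on $E$ this equals $h$, and on $(E)g$ it equals $(h^g)^{-1}$, and elsewhere it is the identity, exactly mirroring the construction in Lemma~\ref{Lemma:Tnrgeneratedbyelementsofsmallsupport}. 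The point now is that $\tilde h$ \emph{locally agrees with $G$}: near points of $\T{X}\setminus(E\cup (E)g)$ it is the identity $\in G$; near points of $(E)g$ it agrees with the single element $(h^g)^{-1}\in G$? — but $h^g \notin G$ a priori. So this still does not close.

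The genuine mechanism, and the step I expect to be the main obstacle, is to leverage that $G$ is full \emph{and} that $h \in N_{H(\T{X})}(G)$ to deduce $h^g \in G$ for a well-chosen $g$: take $g \in G$ with $(E)g \subseteq \T{X}\setminus E$. Then on the clopen set $(E)g$ the homeomorphism $h^g$ agrees, locally, with elements of $G$ — because conjugation by $h$ preserves $G$ and the elements of $G$ supported inside $E$ are, after conjugating by $g$, elements of $G$ supported inside $(E)g$ whose restrictions there cover all possible local behaviours there (this is where fullness and flexibility of the induced action on $(E)g$ is used). Off $(E)g$, $h^g$ is the identity. Hence $h^g$ locally agrees with $G$, so $h^g \in G$ by fullness, and therefore $h = (h^g)^{g^{-1}} \in G$ as $g^{-1}\in G$. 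The crux is justifying that $h^g$ restricted to $(E)g$ is, locally, a restriction of some element of $G$; I would prove this by noting $h^g$ normalises $G$, restricting attention to the rigid stabiliser of $(E)g$ in $G$ and in $H(\T{X})$, checking that the former is full in the latter (a short argument from fullness of $G$, clopenness of $(E)g$, and the fact that any homeomorphism of $(E)g$ extends by the identity to one of $\T{X}$), and concluding via the local-agreement definition. I would present this carefully since it is the heart of the lemma.
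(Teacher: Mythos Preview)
Your proposal has a genuine gap at the crucial step. After several false starts you settle on: move $E$ off itself by $g\in G$, so $h^{g}$ is supported on $F=(E)g$; then argue $h^{g}$ locally agrees with $G$ on $F$ and invoke fullness. But your justification for this local agreement---that elements of $G$ supported in $F$ ``cover all possible local behaviours there''---is simply false. Fullness says that anything which locally agrees with $G$ lies in $G$; it does \emph{not} say every local homeomorphism is realised by some element of $G$. For instance $V=G_{2,1}$ is full and flexible, yet there are homeomorphisms of Cantor space that do not locally agree with $V$ anywhere. Your fallback---restricting to the rigid stabiliser of $F$ and noting $h^{g}$ normalises it---is circular: you have reduced to showing that an element of $N_{H(F)}(G_{F})$ lies in $G_{F}$, with no ``small support'' gained inside $F$, which is strictly harder than the original statement.

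The paper's argument avoids this entirely by working pointwise and using the normaliser hypothesis in one clean stroke. Let $g\in N_{H(\T{X})}(G)$ act trivially on a non-empty clopen set $Y$. For an arbitrary $x\in\T{X}$, flexibility gives $h\in G$ with $(x)h\in Y$; choose neighbourhoods so that $h$ carries a neighbourhood $N_{x}$ of $x$ into $Y$. Now $k:=g^{-1}hg\in G$ because $g$ normalises $G$, and on the neighbourhood $N_{y}$ of $y=(x)g$ mapped by $g^{-1}$ to $N_{x}$, the map $k$ equals $g^{-1}h$ (the final $g$ does nothing once we have landed in $Y$). Hence $g^{-1}=kh^{-1}$ on $N_{y}$, equivalently $g=hk^{-1}$ on $N_{x}$, with $hk^{-1}\in G$. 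So $g$ locally agrees with $G$ at every point, and fullness finishes. The missing idea in your attempt is precisely this: rather than trying to realise $h$ (or a conjugate) globally as a patchwork of $G$-elements, one should, for each point, conjugate a \emph{single} element of $G$ by $g$ and use the identity region $Y$ to peel the conjugating $g$ off one side.
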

\begin{proof}
	Let $g$ be an element of $N_{H(\T{X})}(G)$ which acts trivially on a non-empty clopen subset $Y$ of $\T{X}$. 
	
	Let $x$ be an arbitrary point of $\T{X}$ and set $y = (x)g$. Since $G$ is a flexible, there is an $h \in G$ such that $z = (x)h \in Y$.
	
	Since $Y$ is non-empty, there are open neighbourhoods $N_{x}, N_{y}, N_{z}$ such that $N_{z} \subseteq Y$, $h: N_{x} \to N_{z}$ is a homeomorphism, and $g^{-1}: N_{y} \to N_{x}$ is a homeomorphism.
	
	Consider the element $k = g^{-1}hg$. Since $g \in N_{H(\T{X})}(G)$,  $k \in G$, moreover, since $g \restriction_{N_{z}}$ is the identity map, $k$ coincides with $g^{-1}h$ on $N_{y}$. Therefore, $g^{-1}$ coincides with $kh^{-1}$ on $N_{y}$ and $g$ coincides with $hk^{-1}$ on $N_{x}$. Since $x \in \T{X}$ was arbitrarily chosen and $hk^{-1} \in G$, it follows that for any $x \in \T{X}$, there is a neighbourhood $N_{x}$ of $x$ such that $g$ coincides with an element of $G$ on $N_{x}$. Since $G$ is a full group, $g \in G$.
\end{proof}

\begin{lemma}\label{Lemma:endomorphismsofautfullgroups}
	Let $G$ be a full and flexible group of homeomorphisms of $\T{X}$. Let $h \in H(\T{X})$ be an element such that $h^{-1}N_{H(\T{X})}(G) h \subseteq N_{H(\T{X})}(G)$. Then $h^{-1}G h \subseteq  G$. 
\end{lemma}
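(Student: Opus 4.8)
The strategy is to leverage Lemma~\ref{Lemma:Tnrgeneratedbyelementsofsmallsupport} together with Lemma~\ref{Lemma:elementofTbnrwhichactsastheindentityonaclosedsubsetisinTnr}. Since $G$ is full, Lemma~\ref{Lemma:Tnrgeneratedbyelementsofsmallsupport} tells us that $G$ is generated by its elements of small support; hence it suffices to show that for every $g \in G$ of small support, the conjugate $h^{-1}gh$ again lies in $G$.

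**Key steps.** First, fix $g \in G$ with small support, so there is a proper closed set $U \subseteq \T{X}$ with $g$ acting as the identity on $\T{X} \setminus U$. I observe that $g \in G \subseteq N_{H(\T{X})}(G)$, and by the hypothesis on $h$, the conjugate $h^{-1}gh$ lies in $N_{H(\T{X})}(G)$. Next I check that $h^{-1}gh$ itself has small support: indeed, if $x \notin (U)h$ then $(x)h^{-1} \notin U$, so $((x)h^{-1})g = (x)h^{-1}$, hence $(x)h^{-1}gh = x$; thus $h^{-1}gh$ acts as the identity on $\T{X} \setminus (U)h$, and since $h$ is a homeomorphism, $(U)h$ is a proper closed subset of $\T{X}$. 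So $h^{-1}gh$ is an element of $N_{H(\T{X})}(G)$ of small support. By Lemma~\ref{Lemma:elementofTbnrwhichactsastheindentityonaclosedsubsetisinTnr}, $h^{-1}gh \in G$. Finally, since the elements of small support generate $G$ and $h^{-1}(\,\cdot\,)h$ is a group homomorphism, $h^{-1}Gh \subseteq G$.

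**Main obstacle.** There is essentially no hard step here — the lemma is a clean bookkeeping argument once the two earlier lemmas are in place. The only point that needs a moment of care is the observation that $(U)h$ is again a \emph{proper} closed subset, which is immediate from $h$ being a self-homeomorphism of $\T{X}$, and the verification that conjugation of a small-support element by an arbitrary homeomorphism yields a small-support element. One should also note that Lemma~\ref{Lemma:elementofTbnrwhichactsastheindentityonaclosedsubsetisinTnr} is stated for elements acting trivially on a non-empty \emph{clopen} set; in Cantor space every non-empty open set contains a non-empty clopen set, so an element of small support acts trivially on some non-empty clopen set, and the application is legitimate.
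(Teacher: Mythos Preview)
Your proof is correct and follows exactly the same approach as the paper: reduce to small-support generators via Lemma~\ref{Lemma:Tnrgeneratedbyelementsofsmallsupport}, observe that conjugation by $h$ preserves both membership in $N_{H(\T{X})}(G)$ and the small-support property, and then invoke Lemma~\ref{Lemma:elementofTbnrwhichactsastheindentityonaclosedsubsetisinTnr}. Your write-up is in fact more careful than the paper's, in particular your explicit check that $(U)h$ is proper closed and your remark reconciling the ``small support'' hypothesis with the ``trivial on a clopen set'' hypothesis of Lemma~\ref{Lemma:elementofTbnrwhichactsastheindentityonaclosedsubsetisinTnr}.
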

\begin{proof}
	Let $g \in G$ be an element of small support, then,  $h^{-1}gh = g^{h}$ is again an element of small support and $g^{h} \in N_{H(\T{X})}(G)$. Therefore by Lemma~\ref{Lemma:elementofTbnrwhichactsastheindentityonaclosedsubsetisinTnr} $g^{h}$ in $G$. Since $G$ is generated by its elements of small support, $G^{h} = \{ k^{h} \mid k \in G \} \subseteq G$.
\end{proof}

\begin{Theorem}\label{Theorem:fullflexibleexpansion}
Let $G$ be a full and flexible group of homeomorphisms of $\T{X}$.  Suppose that $N_{H(\T{X})}(G) \cong  \aut{G}$ and $N_{{H(\T{X})}}(N_{H(\T{X})}(G)) \cong \aut{\aut{G}}$. Then $\aut{\aut{G}} = \aut{G}$.
\end{Theorem}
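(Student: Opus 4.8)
The plan is to combine the three preceding lemmas with the two isomorphism hypotheses, working entirely inside $H(\T{X})$. Set $N \seteq N_{H(\T{X})}(G)$ and $N' \seteq N_{H(\T{X})}(N)$. By hypothesis $N \cong \aut{G}$ and $N' \cong \aut{\aut{G}}$, so it suffices to show $N' = N$; since $N \le N'$ always holds (the normaliser of a subgroup contains that subgroup — here $N$ is normal in $N'$ by definition), the real content is the reverse inclusion $N' \subseteq N$.

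To prove $N' \subseteq N$, I would take an arbitrary $h \in N'$ and show $h$ normalises $G$, i.e.\ $h^{-1}Gh = G$. First apply Lemma~\ref{Lemma:endomorphismsofautfullgroups}: since $h \in N' = N_{H(\T{X})}(N)$ we have $h^{-1}Nh = N$, in particular $h^{-1}Nh \subseteq N$, and the lemma yields $h^{-1}Gh \subseteq G$. Now I need the opposite containment, and for this I run the same argument with $h^{-1}$ in place of $h$: because $N$ is normalised by $h$, it is also normalised by $h^{-1}$, so $h N h^{-1} \subseteq N$, and Lemma~\ref{Lemma:endomorphismsofautfullgroups} (applied to the element $h^{-1}$) gives $hGh^{-1} \subseteq G$, equivalently $G \subseteq h^{-1}Gh$. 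Combining the two inclusions, $h^{-1}Gh = G$, so $h \in N_{H(\T{X})}(G) = N$. Hence $N' \subseteq N$ and therefore $N' = N$.

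Finally, translate back through the isomorphisms: $\aut{\aut{G}} \cong N' = N \cong \aut{G}$. To upgrade this from an abstract isomorphism to the literal equality $\aut{\aut{G}} = \aut{G}$ asserted in the statement, I would invoke the canonical nature of the maps involved — the natural map $\aut{G} \to \aut{\aut{G}}$ sending an automorphism to conjugation by it is injective (as $\aut{G}$ is centreless, which follows since $G$ is centreless, being a flexible homeomorphism group, and a centreless group embeds in its automorphism group with trivial centraliser) — so $\aut{G}$ sits inside $\aut{\aut{G}}$, and the cardinality/inclusion argument above forces the inclusion to be onto. Thus the tower stabilises at height one.

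The step I expect to be essentially the whole point, and the only place any homeomorphism-dynamics enters, is the appeal to Lemma~\ref{Lemma:endomorphismsofautfullgroups} — and through it Lemmas~\ref{Lemma:Tnrgeneratedbyelementsofsmallsupport} and \ref{Lemma:elementofTbnrwhichactsastheindentityonaclosedsubsetisinTnr}; once those are in hand the present theorem is a short formal manipulation of normalisers. The one genuine subtlety to be careful about is the symmetry move: Lemma~\ref{Lemma:endomorphismsofautfullgroups} is stated as a one-sided containment $h^{-1}N h \subseteq N \Rightarrow h^{-1}Gh \subseteq G$, so obtaining $G \subseteq h^{-1}Gh$ really does require re-running it for $h^{-1}$, which is legitimate precisely because membership in $N' = N_{H(\T{X})}(N)$ gives the two-sided equality $h^{-1}Nh = N$ and hence both $h$ and $h^{-1}$ satisfy the lemma's hypothesis. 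I would also remark that the hypotheses $N \cong \aut{G}$ and $N' \cong \aut{\aut{G}}$ are exactly what the Rubin-theorem discussion in the introduction supplies for full flexible $G$, so the theorem as stated is the clean algebraic core that the later sections feed into.
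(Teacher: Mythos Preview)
Your argument is correct and is exactly the expansion the paper has in mind: the paper's own proof is the single line ``This is a straight-forward consequence of Lemma~\ref{Lemma:endomorphismsofautfullgroups},'' and you have simply written out that consequence, including the two-sided application of the lemma to $h$ and $h^{-1}$ to turn the one-sided containment into $h^{-1}Gh=G$. Your closing paragraph about upgrading the abstract isomorphism to the literal equality is slightly more than the paper bothers to say, but it is the right thing to note given that the hypotheses are phrased only as isomorphisms; in the paper's intended application (the subsequent corollary) those isomorphisms are the canonical Rubin identifications, so $N'=N$ is already the desired statement.
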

\begin{proof}

This is a straight-forward consequence of Lemma~\ref{Lemma:endomorphismsofautfullgroups}.
\end{proof}

We now show that any full and flexible group of homeomorphisms of $\T{X}$ satisfies the hypothesis of Theorem~\ref{Theorem:fullflexibleexpansion}. We make use of  of Rubin's Theorem \cite{Rubin}:

\begin{Theorem}\label{Thereom:RubinCantor}
	Let $\gen{X,G}$ and $\gen{Y,H}$ be space-group pairs. Assume that $X$ is Hausdorff, locally compact, with no isolated points and that for every $x \in X$ and every open neighbourhood $U$ of $x$, the set $\{xg \mid g \in G \mbox{ and } g\restriction_{(X-U)} = \id\restriction_{(X-U)} \}$ is somewhere dense. Further assume that the same holds for $\gen{Y,H}$. Then for a given group isomorphism $\phi: G \to H$, there is a homeomorphism $\varphi: X \to Y$ such that $g\phi = \varphi^{-1}g \varphi$ for every $g \in G$.
\end{Theorem}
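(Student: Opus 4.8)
This statement is Rubin's reconstruction theorem, so the honest plan is to cite \cite{Rubin}; but if one wanted to reproduce the argument, here is how it would go. The guiding principle is that the homeomorphism type of $X$ is encoded in the abstract group $G$ acting on itself, provided $G$ contains enough elements of small support, and the hypothesis that $\{xg \mid g \in G,\ g\restriction_{(X-U)} = \id\restriction_{(X-U)}\}$ is somewhere dense is precisely the ``locally dense'' condition that makes this encoding possible. So the first thing I would do is fix the target: recover the Boolean algebra $\mathrm{RO}(X)$ of regular open subsets of $X$ (or a $G$-invariant dense sublattice of it) from the abstract group $G$ together with its conjugation action on itself, and then recover $X$ itself as a space of ultrafilters on that lattice.

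The technical core is Rubin's Lemma: under the density hypothesis, every non-empty open set contains the support of some non-trivial element of $G$, and one can detect, from commutation data alone, when the regular open hulls of the supports of two elements are disjoint — disjoint supports force commuting, and conversely one can push an element's support inside any prescribed small set by conjugating. Using this I would define group-theoretically, for each $g$, an object standing for its ``support'', assemble from these a $G$-invariant sublattice $\mathcal{B}(G) \subseteq \mathrm{RO}(X)$ whose order and Boolean operations are all expressible in the language of the acting group, and prove that $\mathcal{B}(G)$ is dense in $\mathrm{RO}(X)$. An abstract isomorphism $\phi : G \to H$ then transports $\mathcal{B}(G)$ to $\mathcal{B}(H)$ as an isomorphism of $G$-lattices, where $G$ acts on the second lattice through $\phi$.

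The second half is the passage from lattices back to spaces. Since $X$ is Hausdorff, locally compact and has no isolated points, its points are recoverable from $\mathcal{B}(G)$ (as suitable ultrafilters, or via the Stone space of the associated Boolean algebra), and likewise for $Y$ from $\mathcal{B}(H)$; the lattice isomorphism coming from $\phi$ therefore yields a homeomorphism $\varphi : X \to Y$. Finally I would verify that $g\phi = \varphi^{-1} g \varphi$ for every $g \in G$ — this is essentially forced, since $\varphi$ was built entirely out of data on which the two actions already agree through $\phi$.

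The main obstacle is the middle step: giving the purely group-theoretic definition of $\mathcal{B}(G)$ and proving that it is dense. This is where the real content of Rubin's theorem lies, where Rubin's Lemma and a delicate analysis of movable elements and commuting families are needed, and it is the reason the result is quoted rather than reproved here — all the more so since only the Cantor-space case is used in this paper, where in principle one could work with the lighter compact, zero-dimensional version of the reconstruction machinery.
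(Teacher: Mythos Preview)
Your instinct is correct: the paper does not prove this statement at all --- it is quoted as Rubin's Theorem with a bare citation to \cite{Rubin}, and is used as a black box. So the ``honest plan'' you name in your first sentence is exactly what the paper does, and there is nothing further to compare.

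Your sketch of how one would reprove the result (recover a dense $G$-invariant sublattice of $\mathrm{RO}(X)$ from commutation data via Rubin's Lemma, pass to ultrafilters to recover $X$, and then read off $\varphi$) is a fair high-level outline of Rubin's strategy, and you are right that the substantive difficulty sits in the group-theoretic reconstruction of the lattice. None of this is needed for the paper, which only invokes the conclusion.
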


\begin{proposition}\label{prop:fullflexiblerubin}
	Let $G$ be a full and flexible group of homeomorphisms of  $\T{X}$. Then $G$ satisfies the conditions of Rubin's theorem  
\end{proposition}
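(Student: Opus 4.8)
The plan is to check that the space--group pair $\gen{\T{X}, G}$ satisfies the hypotheses imposed on $\gen{X,G}$ in Theorem~\ref{Thereom:RubinCantor}. The topological hypotheses are immediate: Cantor space $\T{X}$ is compact Hausdorff, hence locally compact, and it is perfect, hence has no isolated points. So the whole content is the orbit condition: for every $x \in \T{X}$ and every open neighbourhood $U$ of $x$, the set $R_U(x) := \{\, xg \mid g \in G,\ g\restriction_{\T{X}-U} = \id\restriction_{\T{X}-U}\,\}$ is somewhere dense.

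First I would reduce to clopen neighbourhoods. Since the clopen sets form a basis for $\T{X}$, choose a clopen $C$ with $x \in C \subseteq U$ and $C \ne \T{X}$ (if $U \ne \T{X}$ this is automatic; if $U = \T{X}$, use that $\T{X}$ is zero-dimensional with at least two points). Any $g \in G$ that is the identity off $C$ is in particular the identity off $U$, so $R_U(x) \supseteq R_C(x) := \{\, xg \mid g \in G,\ g\restriction_{\T{X}-C} = \id\,\}$, and it is enough to show $R_C(x)$ is dense in the non-empty open set $C$; equivalently, that $R_C(x)$ meets every non-empty clopen subset $D$ of $C$.

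The core step is this last claim, and it is where flexibility and fullness combine. Fix a non-empty clopen $D \subseteq C$. If $x \in D$, then $\id \in G$ witnesses $x \in R_C(x) \cap D$. Otherwise $W := C \setminus D$ is a non-empty clopen neighbourhood of $x$, disjoint from $D$, and both $W$ and $D$ are proper closed subsets of $\T{X}$ with non-empty interior; applying flexibility to the pair $(W, D)$ gives $f \in G$ with $(W)f \subseteq D$. Then $W \cap (W)f = \emptyset$ and $W \cup (W)f \subseteq C \subsetneq \T{X}$, so the map $\hat f$ of $\T{X}$ that equals $f$ on $W$, equals $f^{-1}$ on $(W)f$, and equals the identity on $\T{X} - (W \cup (W)f)$ is readily checked to be a homeomorphism (on each of the three clopen pieces of this partition it is a homeomorphism onto a piece of the corresponding partition of the codomain). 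On a clopen neighbourhood of each point of $\T{X}$ the map $\hat f$ agrees with one of $f$, $f^{-1}$, $\id$, all in $G$, so $\hat f$ locally agrees with $G$; as $G$ is full, $\hat f \in G$. Finally $\hat f$ is the identity off $W \cup (W)f \subseteq C$, so $\hat f\restriction_{\T{X}-C} = \id$, while $x\hat f = xf \in (W)f \subseteq D$; hence $x\hat f \in R_C(x) \cap D$, as required. This establishes the orbit condition and completes the verification.

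The only genuine obstacle is the core step: flexibility on its own supplies an element of $G$ carrying a neighbourhood of $x$ into $D$ but with no control over its support, and one must localise it so that it fixes everything outside $C$ while remaining in $G$. The cut-and-paste construction of $\hat f$ — keep $f$ on $W$, insert $f^{-1}$ on the image, and the identity elsewhere — does exactly this, and it stays inside $G$ for precisely the reason used in the proof of Lemma~\ref{Lemma:Tnrgeneratedbyelementsofsmallsupport}: fullness of $G$. Everything else is standard point-set topology of Cantor space.
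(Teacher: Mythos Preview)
Your proof is correct and follows essentially the same approach as the paper's: apply flexibility to carry a clopen neighbourhood of $x$ into the target region, then use the cut-and-paste construction (agree with $f$ on the source, with $f^{-1}$ on the image, and with the identity elsewhere) together with fullness to localise the support inside $U$. The only cosmetic difference is that the paper targets an arbitrary point $y\in U$ and a neighbourhood of it, whereas you first reduce to a proper clopen $C\subseteq U$ and then hit every clopen $D\subseteq C$; both yield density of the local orbit in a non-empty open set.
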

\begin{proof}

	Let $x \in \T{X}$ and $U$ be any open neighbourhood of $x$. Let $y \in U$, $y \ne x$ be arbitrary. We may find a clopen subset $U'$ of $U$ containing $x$ and $y$. Let $M_{x} \subseteq U'$ and $M_{y}\subseteq U'$ be a pair of  proper clopen neighbourhoods of $x$ and $y$ respectively. We may assume that $M_{x}$ and $M_{y}$ are disjoint since $y \ne x$. Since $G$ is flexible we may find an element $g \in G$ such that $M_{(x)g}:=(M_{x})g \subseteq M_{y}$.

	Let $h \in H(\T{X})$ be defined such that $h$ agrees with $g$ on $M_{x}$, with $g^{-1}$ on $M_{(x)g}$, and with the identity map on the complement of $M_{x} \cup M_{(x)g}$. Then $h \in G$ since $G$ is full. Moreover, the support of $h$ is contained entirely in $U$.
	
	Therefore for any element $y \in U$, and any neighbourhood $V$ of $y$, there is an element  $h \in G$ such that $(x)h \in V$ and $h\restriction_{(\T{X}-U)} = \id \restriction_{(\T{X}-U)}$. In particular, $$\{xg \mid g \in G \mbox{ and } g\restriction_{(X-U)} = \id\restriction_{(X-U)} \}$$ is dense in $U$.
	
\end{proof}

\begin{lemma}\label{Lemma:normaliserisautomorphism}
	Let $G$ be a full and flexible group of homeomorphisms of  $\T{X}$. Then the centraliser of $G$ in $H(\T{X})$ is trivial. 
\end{lemma}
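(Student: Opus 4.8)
The plan is to show directly that any homeomorphism $c$ of $\T{X}$ commuting with every element of $G$ must be the identity. Suppose for contradiction that $c \in H(\T{X})$ centralises $G$ but $c \neq \id$. Then there is a point $x$ with $(x)c = z \neq x$, and by the standard separation property of Cantor space we may pick a proper clopen set $E$ with $x \in E$, $z \in (E)c$, and $E \cap (E)c = \emptyset$ (shrinking $E$ via continuity of $c$ if necessary). The idea is to exploit flexibility to produce an element $g \in G$ whose support behaviour is incompatible with commuting with $c$: intuitively, $c$ must carry the support of $g$ to the support of $c^{-1}gc = g$, so $\supp(g)$ is $c$-invariant, but flexibility lets us build elements of $G$ supported inside $E$ that move points, and such a support cannot be invariant under a map sending $E$ off itself.

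More concretely, the key steps in order are: first, using flexibility (or rather the construction already carried out in the proof of Proposition~\ref{prop:fullflexiblerubin} together with fullness), exhibit a non-identity element $g \in G$ whose support is contained in $E$; indeed, that proof produces, for any open $U$, an element of $G$ of small support inside $U$ that moves a prescribed point, so take $U = \mathrm{int}(E)$. Second, observe that since $c$ centralises $G$, we have $g = c^{-1} g c$, hence $\supp(g)\, c = \supp(c^{-1} g c) = \supp(g)$, so the support of $g$ is setwise fixed by $c$. Third, derive the contradiction: $\emptyset \neq \supp(g) \subseteq E$, so $\supp(g)\, c \subseteq (E)c$, which is disjoint from $E \supseteq \supp(g)$; thus $\supp(g) \subseteq \supp(g)\,c$ forces $\supp(g) \subseteq E \cap (E)c = \emptyset$, contradicting $g \neq \id$. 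Therefore $c = \id$ and the centraliser is trivial.

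The one point requiring a little care — and the likely main obstacle — is the very first step: guaranteeing the existence of a non-identity element of $G$ supported strictly inside a prescribed non-empty clopen set. This needs both flexibility (to move a small clopen neighbourhood $M_x$ into a disjoint small clopen neighbourhood $M_y$ inside $E$) and fullness (to splice the resulting partial map with the identity outside, producing an honest element of $G$). This is exactly the construction in the proof of Proposition~\ref{prop:fullflexiblerubin}, so I would simply invoke it, noting that it in fact produces elements of $G$ of small support localised in any given open set and moving any prescribed interior point. Once that is in hand, the support-invariance argument is immediate, and no further subtlety arises; in particular we never need to know anything about the global structure of $G$ beyond fullness and flexibility.

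Finally, I note this lemma is the last ingredient needed to apply Rubin's Theorem in the form stated: having verified the somewhere-density hypothesis in Proposition~\ref{prop:fullflexiblerubin} and triviality of the centraliser here, one concludes that the conjugation action identifies $\aut{G}$ with $N_{H(\T{X})}(G)$ (the isomorphism being injective precisely because the centraliser is trivial, and surjective because Rubin's Theorem realises every automorphism by a homeomorphism), and the same reasoning applied to $N_{H(\T{X})}(G)$ — which is again full and flexible, or at least satisfies Rubin's hypotheses — identifies $\aut{\aut{G}}$ with $N_{H(\T{X})}(N_{H(\T{X})}(G))$, so that Theorem~\ref{Theorem:fullflexibleexpansion} applies.
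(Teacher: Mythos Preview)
Your argument is correct and follows essentially the same approach as the paper's: both produce, via the construction in Proposition~\ref{prop:fullflexiblerubin}, a non-identity element of $G$ with small support lying on one side of the displacement $x \mapsto (x)c$, and then observe this is incompatible with commuting with $c$. The only cosmetic difference is that the paper computes $(x)h^{g} \neq (x)h$ at a specific point (working inside the image clopen set $(N_x)h$), whereas you phrase the contradiction in terms of $c$-invariance of $\supp(g)$; these are the same idea.
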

\begin{proof}
	This follows since by Proposition~\ref{prop:fullflexiblerubin}, $G$ satisfies the conditions of Theorem~\ref{Thereom:RubinCantor}. For let $h \in H(\T{X})$ be any non-trivial element. Let $x \in \T{X}$ be such that $(x)h \ne x$. We may find  a clopen neighbourhood $N_{x}$ of $x$ such that $(N_{x})h$ is disjoint from $N_{x}$. Let $z \in (N_{x})h$ be a point distinct from $(x)h$.   Let $U, V \subseteq (N_{x})h$ be disjoint clopen neighbourhoods of $(x)h$ and $z$ respectively. Then there is an element $g \in G$ such that $g\restriction_{(X-(N_{x})h)} = \id \restriction_{(X-(N_{x})h)}$ and $(x)hg \in V$. Therefore $h^{g} \ne h$ as $(x)h^{g} \in V$ and $(x)h \in U$.
\end{proof}

\begin{corollary}
	Let $G$ be a full and flexible group  of homeomorphisms of $\T{X}$. Then $\aut{\aut{G}} = \aut{G}$. 
\end{corollary}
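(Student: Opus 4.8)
The plan is to combine the pieces already established in this section. By Proposition~\ref{prop:fullflexiblerubin}, $G$ satisfies the hypotheses of Rubin's theorem (Theorem~\ref{Thereom:RubinCantor}), and by Lemma~\ref{Lemma:normaliserisautomorphism} the centraliser of $G$ in $H(\T{X})$ is trivial. The standard consequence of Rubin's theorem in this setting is that conjugation gives an isomorphism $N_{H(\T{X})}(G) \to \aut{G}$: every automorphism $\phi$ of $G$ is realised by conjugation by a unique homeomorphism $\varphi$ of $\T{X}$ (uniqueness because the centraliser is trivial), and such a $\varphi$ must normalise $G$ since it realises an automorphism. So the first step is to record that $N_{H(\T{X})}(G) \cong \aut{G}$.

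The second step is to bootstrap this to $\aut{G}$ itself. Write $H \seteq N_{H(\T{X})}(G)$, so $H \cong \aut{G}$ by the previous step. I would check that $H$ is again a full and flexible group of homeomorphisms of $\T{X}$: flexibility is immediate since $G \le H$ and flexibility only requires the existence of suitable elements; fullness of $H$ needs a short argument, but one can observe that $H$ contains $G$, which by Lemma~\ref{Lemma:Tnrgeneratedbyelementsofsmallsupport} is generated by elements of small support, and any homeomorphism locally agreeing with $H$ then locally agrees with $G$ on a neighbourhood of each point where the witnessing element can be taken in $G$ — alternatively, one notes that the argument realising $\aut{G} \cong N_{H(\T{X})}(G)$ applies verbatim to show $H$ satisfies Rubin's hypotheses, which together with triviality of its centraliser gives $N_{H(\T{X})}(H) \cong \aut{H} \cong \aut{\aut{G}}$. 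Either way, we obtain $N_{H(\T{X})}(N_{H(\T{X})}(G)) \cong \aut{\aut{G}}$.

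With both isomorphisms in hand, $G$ satisfies the hypotheses of Theorem~\ref{Theorem:fullflexibleexpansion}, and we conclude $\aut{\aut{G}} = \aut{G}$ directly. Unwinding what Theorem~\ref{Theorem:fullflexibleexpansion} does: given $h \in \aut{\aut{G}} = N_{H(\T{X})}(H)$, Lemma~\ref{Lemma:endomorphismsofautfullgroups} (applied with this $h$, using $h^{-1} H h \subseteq H$) yields $h^{-1} G h \subseteq G$; applying the same to $h^{-1}$ gives $h^{-1} G h = G$, so $h \in N_{H(\T{X})}(G) = \aut{G}$, and hence $\aut{\aut{G}} \subseteq \aut{G}$, while the reverse containment is automatic.

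The main obstacle is the bookkeeping in the second step: verifying that $N_{H(\T{X})}(G)$ genuinely inherits fullness and flexibility, or equivalently that the consequence of Rubin's theorem can be iterated. This is where one must be careful that the homeomorphism $\varphi$ produced by Rubin's theorem for an isomorphism $\aut{G} \to \aut{G}$ really lies in $H(\T{X})$ normalising $H = N_{H(\T{X})}(G)$, rather than only normalising $G$; triviality of the centraliser of $G$ (Lemma~\ref{Lemma:normaliserisautomorphism}) is the key input that makes $\varphi$ unique and hence forces it to normalise $H$. Once that is pinned down, everything else is a formal consequence of the lemmas above.
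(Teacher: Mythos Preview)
Your proposal reaches the right conclusion via the same route as the paper, once you take the ``alternative'' you mention: since $G \le H = N_{H(\T{X})}(G)$, the Rubin local-density condition in Proposition~\ref{prop:fullflexiblerubin} is inherited by $H$ (it only asserts the existence of suitable elements), and the centraliser of $H$ is contained in that of $G$, hence trivial; so $N_{H(\T{X})}(H) \cong \aut{H} \cong \aut{\aut{G}}$, and Theorem~\ref{Theorem:fullflexibleexpansion} finishes. This is exactly the paper's argument.

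Your primary route, however---showing that $H$ is itself full and flexible so as to re-apply Proposition~\ref{prop:fullflexiblerubin} and Lemma~\ref{Lemma:normaliserisautomorphism} to $H$---is a detour, and the sketch you give for fullness does not work. If a homeomorphism $f$ locally agrees with $H$, the local witnesses are elements of $H$, and there is no reason they can be replaced by elements of $G$; your claim that ``the witnessing element can be taken in $G$'' is unjustified. Fortunately, fullness of $H$ is never needed: Rubin's theorem requires only the transitivity condition and a trivial centraliser, both of which pass upward from $G$ to any overgroup in $H(\T{X})$. The paper makes precisely this observation (``$G \le N_{H(\T{X})}(G)$ and so $N_{H(\T{X})}(G)$ satisfies the conditions of Rubin's Theorem as well'') and proceeds directly. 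You should drop the fullness discussion and keep only the alternative.
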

\begin{proof}
	By Propositon~\ref{prop:fullflexiblerubin} and Lemma~\ref{Lemma:normaliserisautomorphism}, $\aut{G} \cong N_{H(\T{X})}(G)$. In particular, $G \le N_{H(\T{X})}(G)$ and so $N_{H(\T{X})}(G)$ satisfies the conditions of Rubin's Theorem as well. 
	
	Therefore, $N_{H(\T{X})}(N_{H(\T{X})}(G)) = \aut{\aut{G}}$. The result now follows by Theorem~\ref{Theorem:fullflexibleexpansion}. 
\end{proof}

\begin{corollary}
The following groups G all have the property $\aut{\aut{G}}=\aut{G}$.
\begin{enumerate}
	\item The Higman‐Thompson groups $\Gnr$.
	\item The rational group $\T{R}_2$.
	\item The Nekrashevych groups $V_{n}(G)$.
	\item The groups $V_{n}(T)$ introduced in the paper \cite{DonovenOlukoya}.
	
\end{enumerate}
	
\end{corollary}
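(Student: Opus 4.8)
The final statement is the Corollary asserting that the four named families of groups all satisfy $\aut{\aut{G}} = \aut{G}$. The natural strategy is simply to verify, for each group on the list, that it is a full and flexible group of homeomorphisms of Cantor space, and then invoke the immediately preceding Corollary (equivalently Theorem~\ref{Theorem:fullflexibleexpansion} together with Proposition~\ref{prop:fullflexiblerubin} and Lemma~\ref{Lemma:normaliserisautomorphism}). So the proof is essentially a bookkeeping exercise: unpack, for each family, the known structural facts that make ``full'' and ``flexible'' hold. The \textbf{fullness} checks are the routine ones --- these groups are all defined (or known) to contain every homeomorphism that locally agrees with the group, since they are closed under the natural ``cut-and-paste'' operations on clopen pieces of Cantor space. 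The \textbf{flexibility} checks amount to producing, for any two proper closed sets $E_1, E_2$ with nonempty interior, an element of the group carrying $E_1$ into $E_2$; for the Higman--Thompson groups this is the classical fact that $\Gnr$ acts highly transitively on the relevant ``basic clopen'' sets, and for the overgroups $\R_2$, $V_n(G)$, $V_n(T)$ one observes they contain a copy of $\Gnr$ (or its analogue) and so inherit flexibility.

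\begin{proof}
	By the previous Corollary, it suffices to verify that each of the listed groups is a full and flexible group of homeomorphisms of Cantor space.

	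\textit{(1) The Higman--Thompson groups $\Gnr$.} These act on the Cantor space $\CCnr$ of infinite strings over an $n$-letter alphabet with $r$ roots. Fullness is standard: any homeomorphism of $\CCnr$ that locally agrees with $\Gnr$ is, by compactness, determined by finitely many prefix-replacement rules on a clopen partition, hence lies in $\Gnr$. Flexibility follows because, given proper closed sets $E_1, E_2$ with nonempty interiors, one may choose basic clopen sets (cones) $C_1 \subseteq E_1$ and $C_2$ with $\mathrm{int}(E_2) \supseteq C_2$, and then a prefix-replacement element of $\Gnr$ sending $C_1$ onto $C_2$ (possible since $\Gnr$ acts transitively on cones of the same type up to the standard cancellation/expansion rules), which may be arranged to carry all of $E_1$ into $E_2$ by also mapping the complement of $C_1$ into $C_2$ via finitely many further prefix replacements.

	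\textit{(2) The rational group $\R_2$.} Since $\R_n \cong \R_2$ for all $n$ by \cite{GriNekSus}, and $\R_n$ consists of the homeomorphisms of $\CCn$ induced by finite transducers, fullness holds because the property of being realised by a finite transducer is a local condition closed under clopen cut-and-paste. As $\R_2$ contains $G_{2,1}$, flexibility is inherited from part (1).

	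\textit{(3) The Nekrashevych groups $V_n(G)$ and (4) the groups $V_n(T)$ of \cite{DonovenOlukoya}.} By construction these are overgroups of $G_{n,1}$ (respectively $G_{n,r}$) inside $H(\CCn)$: they consist of homeomorphisms given by prefix-replacement combined with applying an element of $G$ (resp.\ the transducer $T$) on tails. Fullness again follows since membership is determined by a clopen partition together with local data, and both the prefix-replacement part and the transducer part are closed under restriction to and gluing along clopen sets. Flexibility is inherited from the contained copy of the Higman--Thompson group. In all four cases, the group is full and flexible, so by the previous Corollary $\aut{\aut{G}} = \aut{G}$.
\end{proof}
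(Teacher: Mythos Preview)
Your approach is exactly the paper's: the paper states this Corollary without proof, relying on the earlier Remark that $\Gnr$, $\R_2$, and the Nekrashevych groups are full and flexible, together with the immediately preceding Corollary. Your write-up simply unpacks that Remark, which is fine.

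One wording glitch in your flexibility check for $\Gnr$: choosing a cone $C_1 \subseteq E_1$ is not what you want, and ``mapping the complement of $C_1$ into $C_2$'' cannot be done by a homeomorphism (you would be collapsing the whole space into $C_2$). The correct move is to use that $E_1$ is a \emph{proper} closed set, so its complement contains some cone $D$; then $E_1$ is contained in the clopen set $\CCnr \setminus D$, which is a finite union of cones, and a prefix-replacement element of $\Gnr$ can send all of these into a single cone $C_2 \subseteq \operatorname{int}(E_2)$. With that adjustment your argument is complete and matches the paper's intended route.
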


We note that in general Lemma~\ref{Lemma:Tnrgeneratedbyelementsofsmallsupport} fails for an arbitrary compact Hausdorff space. Jim Belk in a personal communication gives an example of a full and flexible group of homeomorphisms of the circle that is not generated by its elements of small support. Therefore, the strategy we employ to prove Theorem~\ref{Theorem:fullflexibleexpansion} does not go through for an arbitrary compact Hausdorff space. 

We observe that by Lemma~\ref{Lemma:endomorphismsofautfullgroups} we may say something about homeomorphisms of Cantor space  that normalise $\aut{\Gnr}$. Specifically, such homeomorphisms must also normalise $\Gnr$. Therefore, they must be rational and one-way synchronzing by results of \cite{OlukoyaAutTnr, AutGnr}.

In particular we have,

\begin{corollary}
	Let and $h \in H(\CCnr)$. Suppose that $h^{-1} \aut{\Gnr} h \subseteq \Bnr $, then $h$ is rational and can be induced by a synchronizing transducer. 
\end{corollary}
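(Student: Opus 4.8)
The plan is to reduce the statement to Lemma~\ref{Lemma:endomorphismsofautfullgroups} and then appeal to the structure theory of $\aut{\Gnr}$ developed in \cite{AutGnr, OlukoyaAutTnr}.

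First I would recall that $\Gnr$ is full and flexible (as noted in the Remark above), so by Proposition~\ref{prop:fullflexiblerubin} and Lemma~\ref{Lemma:normaliserisautomorphism}, via Rubin's Theorem, $\aut{\Gnr}$ is realised concretely as the normaliser $N_{H(\CCnr)}(\Gnr) \le H(\CCnr)$; henceforth I identify the two. I would also record the (defining) property of $\Bnr$ that I need, namely $\Bnr \subseteq N_{H(\CCnr)}(\Gnr) = \aut{\Gnr}$, so that every element of $\Bnr$ normalises $\Gnr$.

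Granting this, the argument is short. Suppose $h \in H(\CCnr)$ satisfies $h^{-1}\aut{\Gnr}h \subseteq \Bnr$. Chaining the two inclusions yields
\[
 h^{-1} N_{H(\CCnr)}(\Gnr)\, h \;=\; h^{-1}\aut{\Gnr}\,h \;\subseteq\; \Bnr \;\subseteq\; N_{H(\CCnr)}(\Gnr),
\]
which is precisely the hypothesis of Lemma~\ref{Lemma:endomorphismsofautfullgroups} with $G = \Gnr$. That lemma then gives $h^{-1}\Gnr h \subseteq \Gnr$; equivalently, conjugation by $h$ is an injective endomorphism of $\Gnr$ induced by a homeomorphism of $\CCnr$. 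Finally I would invoke the results of \cite{AutGnr, OlukoyaAutTnr}: evaluating $h$ on suitable prefix-exchange elements of $\Gnr$ and examining the finitely many resulting local actions shows that $h$ is rational and can be induced by a (one-way) synchronizing transducer. (This is exactly how those papers treat elements of $N_{H(\CCnr)}(\Gnr)$; see also the Remark preceding the statement.)

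The main obstacle is the very last step: I must check that the rationality-and-synchronization arguments of \cite{AutGnr, OlukoyaAutTnr}, which are framed for genuine automorphisms (elements of $N_{H(\CCnr)}(\Gnr)$), in fact only use that conjugation by $h$ carries $\Gnr$ \emph{into} $\Gnr$, and so go through verbatim under the weaker hypothesis $h^{-1}\Gnr h \subseteq \Gnr$. A secondary, essentially bookkeeping, point is to confirm the inclusion $\Bnr \subseteq N_{H(\CCnr)}(\Gnr)$ that makes the displayed chain legitimate; this should be immediate from the definition of $\Bnr$ in \cite{AutGnr, OlukoyaAutTnr}.
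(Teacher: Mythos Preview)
Your proposal is correct and follows exactly the route the paper intends: the paper gives no separate proof of this corollary, only the preceding remark, which is precisely your argument (apply Lemma~\ref{Lemma:endomorphismsofautfullgroups} to get $h^{-1}\Gnr h\subseteq\Gnr$, then invoke \cite{AutGnr,OlukoyaAutTnr}). Note that the paper records $\aut{\Gnr}=\Bnr$ as an \emph{equality} (the theorem of Bleak, Cameron, Maissel, Navas and O), so your ``bookkeeping'' inclusion is immediate, and your caveat about the one-sided hypothesis is exactly why the remark and the corollary claim only ``(one-way) synchronizing'' rather than ``bi-synchronizing''.
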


The following question about $\T{R}_{2}$ remains  open:

\begin{enumerate}[label={\bf Q.\arabic*}]
	\item Is the equality  $\aut{\R_{2}} = \R_{2}$ valid?
\end{enumerate}

%One may wonder if we require the space $\T{X}$ in Theorem~\ref{Theorem:fullflexibleexpansion} need to be Cantor space. The following example due to Jim Belk shows what can go wrong if we consider only a compact Hausdorff space. Consider $G$ to be $\pi_{1}$ of a surface of genus $2$. Its action by isometries on its Cayley graph corresponds to the action by homeomorphisms of the group on the boundary circle. However, though this group is full and flexible, it has no elements of small support.

\section{Automorphism tower of \texorpdfstring{$\aut{\Tnr}$}{Lg} }\label{Section:autautTnr}
The proof strategy in the previous section does not go through for groups which are flexible but not full. Therefore, we need to take a slightly different approach for such groups. In this section we get around this issue for the group $\Tnr$ and show that $\aut{\aut{\Tnr}} = \aut{\Tnr}$. Our approach for $\Tnr$ begins in much the same way as the previous section: the group $\Tnr$ is generated by its elements of small support and is a Rubin group on the circle. The divergence occurs as the proof of   Lemma~\ref{Lemma:elementofTbnrwhichactsastheindentityonaclosedsubsetisinTnr} as given in the previous section, does not go through verbatim in this context. To recover Lemma~\ref{Lemma:elementofTbnrwhichactsastheindentityonaclosedsubsetisinTnr}, we make use of the group of germs of elements of $\aut{\Tnr}$. This was a key tool in the articles \cite{MBrin2, MBrinFGuzman}. We characterise the group of germs of elements of $\aut{\Tnr}$ and from our characterisation, Lemma~\ref{Lemma:elementofTbnrwhichactsastheindentityonaclosedsubsetisinTnr} naturally arises. It is possible that a similar approach works for other flexible but not full groups of homeomorphisms. 

We require slightly technical machinery to characterise the germs of elements of $\aut{\Tnr}$. Specifically, we assume familiarity with the characterisation of $\aut{\Tnr}$, as  given in \cite{OlukoyaAutTnr}, as a group of homeomorphisms of Cantor space induced by transducers. For background reading on such groups one should also consult the articles \cite{GriNekSus,AutGnr}. 

\subsection{Words and Cantor space}
Set $\xn:= \{0,1,\ldots, n-1\}$, and $\dotr:= \{\dot{1}, \dot{2}, \ldots, \dot{r}\}$. Set $\xns$ to be the set of all finite words (including the empty word) in the alphabet $\xn$, and set $\xnrs:= \dotr \times \xns$. We identify $\xnrs$ with the set consisting of the empty word and  all finite words over the alphabet $\dotr \sqcup \xn$ which begin with an element of $\dotr$ and contain no other letters from $\dotr$. We shall use $\epsilon$ for the empty word. Set $\xnp := \xns\backslash \{\epsilon\}$ likewise set $\xnrp := \xnrs\backslash \{\epsilon\}$. For $j \in \N$ we denote by $\pxnl{j}$ and $\pxnrl{j}$ the set of all words in $\xns$ and $\xnrs$ of length $j$. 

Set $\xno$ to be the set of all infinite words over the alphabet $\xn$ and set $\xnro := \dotr \times \xno$. We identify $\xnro$ with the set of all infinite words over the alphabet $\dotr \sqcup \xn$ which begin with a letter in $\dotr$ and have no other occurrence of an element of $\dotr$. 

The sets $\xno$ and $\xnro$ are homeomorphic to Cantor space with the usual topology. We denote by $\CCn$ the space $\xno$ and $\CCnr$ the space $\xnro$.

Given a word $\nu \in \xnrp \sqcup \xns$ we set $U_{\nu}:= \{\nu \rho \mid \rho \in \CCn\}$, if $\nu = \epsilon$. Depending on the context $U_{\epsilon}$ represents either $\CCnr$ or $\CCn$, whenever we use this notation, it will be clear which set is meant. 

The set $\xns$ is a monoid under concatenation. We also observe that concatenating an element of $\xnrp$ with an element of $\xns$ results in an element of $\xnrs$. We represent this operation by simply writing the elements beside each other. 

We may partially order the elements of $\xns$ and $\xnrs$ as follows. Let $X$ be either $\xnrs$ or $\xns$. Given two elements $\nu, \eta \in X$  we write $\nu \le \eta$ if $\nu$ is a prefix of $\eta$. If $\nu \not\le \eta$ and $\eta \not\le \nu$, then we say $\nu$ is incomparable to $\eta$ and write $\nu \perp \eta$ to denote this. Let $\nu, \eta \in X$ such that $\nu \le \eta$, then we write $\eta - \nu$ for the word $\tau \in \xns \sqcup \xnrs$ such that $\eta = \nu \tau$. 

\begin{Definition}
	Let $X^{\ast}$ be one of $\xnrs$ or $\xns$. A subset $\ac{u}$ of $X^{\ast}$ is called an \emph{antichain (for $X^{\ast}$)} if $\ac{u}$ consists of pairwise incomparable elements. An antichain $\ac{u}$ for $X^{\ast}$ is called \emph{complete} if for any word $\nu \in X^{\ast}$ either there is some element of $\ac{u}$ which is a prefix of $\nu$ or $\nu$ is a prefix of some element of $\ac{u}$. 
\end{Definition}

The natural ordering on  the sets  $\dotr$ and $\xn$ induced from $\N$, means that we may consider the lexicographic ordering $\lelex$ on the sets $\xnrs$ and $\xns$. That is for $\nu, \mu \in \xns$ or $\nu, \mu \in \xnrs$, $\nu \lelex \mu$ if either $\nu$ is a prefix of $\mu$ or there are words $u \in \xnrs \sqcup
\xns$, $v,w \in \xns$ and $a, b \in \dotr$ or $a,b \in \xn$ such that  $a$ is less than $b$ in the natural ordering on $\dotr$ or $\xn$ and $\nu = uaw$ and $v=ubw$.

In this article we {\bfseries{assume that all antichains are ordered lexicographically}}.

\subsection{The subgroups \texorpdfstring{$\Bnr$ and $\TBnr$ \mbox{ of } $\T{R}_{n,r}$ }{Lg}}
The article \cite{AutGnr} shows that the subgroup  $\Rnr$  of $H(\CCnr)$ is the image under a topological conjugacy of the group $\mathcal{R}_{2} \le H(\CCmr{2,1})$. We specify some subgroups of $\Rnr$ based on a combinatorial property of the transducer inducing the homeomorphisms.

We begin with a combinatorial property of the transducers.

\begin{Definition}
	A transducer (initial or non-initial) $T = \gen{X_I, X_O, Q_T, \pi_T, \lambda_T}$ is said to be \emph{synchronizing at level $k$} for some natural number $k \in \N$, if there is a map $\mathfrak{s}: X_I^{k} \to  Q_T$ such that for a word $\Gamma \in X_I^{{k}}$ and for any state $q \in Q_T$ we have $\pi_{T}(\Gamma, q) = (\Gamma)\mathfrak{s}$. We say that $T$ is \emph{synchronizing} if it is synchronizing at level $k$ for some $k \in \N$.
\end{Definition}

We will denote by $\core(T)$ the sub-transducer of $T$ induced by the states in the image of $\mathfrak{s}$. We call this sub-transducer the \emph{core of $T$}. If $T$ is equal to its core then we say that $T$ is \emph{core}. Viewed as a graph $\core(T)$ is a strongly connected transducer. 

\begin{Definition}
	If $T$ is an initial transducer $T_{q_0}$  which is invertible, then we say that $T_{q_0}$ is bi-synchronizing if both $T_{q_0}$ and its inverse are synchronizing. Note that when $T$ is synchronous, then we shall say $T$ is bi-synchronizing if $T$ and its inverse are synchronizing.
\end{Definition}

We say that a transducer $A_{q_0}$ over $\CCnr$ is synchronizing at level $k$ for a natural number $k \in \N$ if given any word $\Gamma$ of length $k$ in $\xns$ the active state of $A_{q_0}$ when $\Gamma$ is processed from any non-initial state of $A_{q_0}$ is completely determined by $\Gamma$. We say that $A_{q_0}$ is synchronizing if it is synchronizing at level $k$ for some $k \in \N$. Thus we may also extend the notions of `core'  for synchronizing transducers over $\CCnr$. If the inverse of  $A_{q_0}$ is also synchronizing, then we say that $A_{q_0}$ is bi-synchronizing.

\begin{Definition}
	Let $T_{q_0}$ be an initial synchronizing transducer for $\CCn$ or $\CCnr$, then $T_{q_0}$ is said to have trivial core if $\core(T_{q_0})$ consists of the single state transducer inducing the identity homeomorphism on $\CCn$.
\end{Definition}

The set $\Bnr$ of all homeomorphisms in $\Rnr$ which may be represented by a bi-synchronizing transducers forms a subgroup of $\Rnr$ (\cite{AutGnr}). Let $\TBnr$ be the subgroup of $\Bnr$ of elements which either preserve or reverse the standard circular ordering of $\CCnr$ and preserves the set $\{ \nu \xi^{\omega} \mid \nu \in \xnrp, \xi \in \{0, n-1 \}\}$.  The subgroup of $\Bnr$ consisting of all elements which can be represented by a bi-synchronizing transducer with trivial core is the Higman-Thompson group $\Gnr$. The condition that the core is trivial means that elements of $\Gnr$ are homeomorphisms of $\CCnr$ given by prefix replacement maps. That is, given $g \in \Gnr$, there are complete antichains  $\ac{u} = \{u_0, \ldots, u_{l} \}$, $\ac{v}= \{v_0, \ldots, v_l\}$ for $\xnrs$ and a map $\tau \in \sym(\{0,1,\ldots,l\})$ such that for $0 \le a \le l$ and $\rho \in \CCn$, $(u_a\rho)g = v_{(a)\tau}\rho$. Let  $\CTnr$ be the subgroup of $\Gnr$ consisting of those elements $g$ such that there are complete antichians  $\ac{u} = \{u_0, \ldots, u_{l} \}$, $\ac{v}= \{v_0, \ldots, v_l\}$ for $\xnrs$ and $ b \in \{0,1,\ldots,l\}$ such that for $0 \le a \le l$ and $\rho \in \CCn$, $(u_a\rho)g = v_{(a+b)\mod{l}}\ \rho$. We observe that $\CTnr$ is isomorphic to the Higman-Thompson group $\Tnr$. 

The following result is proved in \cite{AutGnr}.

\begin{Theorem}[Bleak, Cameron, Maissel, Navas and O]
	$\aut{\Gnr} \cong \Bnr$.
\end{Theorem}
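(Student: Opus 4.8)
The plan is to identify $\aut{\Gnr}$ with the normaliser $N_{H(\CCnr)}(\Gnr)$ and then to prove that this normaliser is precisely $\Bnr$. The identification is immediate from what is already available: $\Gnr$ is a full and flexible group of homeomorphisms of $\CCnr$, so by Proposition~\ref{prop:fullflexiblerubin} it satisfies the conditions of Rubin's Theorem~\ref{Thereom:RubinCantor}, and by Lemma~\ref{Lemma:normaliserisautomorphism} its centraliser in $H(\CCnr)$ is trivial. Hence every automorphism of $\Gnr$ is realised by conjugation by a unique element of $H(\CCnr)$, the realising element lies in $N_{H(\CCnr)}(\Gnr)$, and conversely each element of $N_{H(\CCnr)}(\Gnr)$ induces an automorphism; so $\aut{\Gnr} \cong N_{H(\CCnr)}(\Gnr)$, and the theorem reduces to the set-theoretic equality $N_{H(\CCnr)}(\Gnr) = \Bnr$.

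For the inclusion $\Bnr \subseteq N_{H(\CCnr)}(\Gnr)$ I would argue directly with transducers, using that the elements of $\Gnr$ are exactly the bi-synchronizing homeomorphisms with trivial core (equivalently, the prefix-replacement maps), while $\Bnr$ consists of all bi-synchronizing homeomorphisms. Given $h \in \Bnr$ and $g \in \Gnr$, one forms the product transducer computing $h^{-1}gh$ and checks (i) that it is synchronizing and (ii) that its core is trivial. Point (i) holds because a composite of synchronizing transducers becomes synchronizing once one passes to its core: on a sufficiently long input word the active state at each of the three factors is eventually forced. For (ii) the key observation is that along any branch $h^{-1}$ drives into some core state $c$; then $g$, having trivial core, collapses to the identity state after finitely many further letters while performing only a prefix replacement; then $h$ is forced into the core state inverse to $c$, so the two core contributions cancel and $h^{-1}gh$ acts as a prefix replacement on every sufficiently deep cone. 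Hence $h^{-1}gh \in \Gnr$, so $\Gnr \trianglelefteq \Bnr$ and $\Bnr \le N_{H(\CCnr)}(\Gnr)$.

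The substantial direction is $N_{H(\CCnr)}(\Gnr) \subseteq \Bnr$: I must show that every $h \in H(\CCnr)$ with $h^{-1}\Gnr h = \Gnr$ is rational and bi-synchronizing. The strategy exploits the rich supply of prefix-replacement elements of $\Gnr$. Conjugation by $h$ sends prefix-replacement maps to prefix-replacement maps and, in particular, carries the subgroup of $\Gnr$ supported in a cone $U_\nu$ — a copy of $G_{n,1}$ — isomorphically onto the subgroup of $\Gnr$ supported in the clopen set $U_\nu h$; tracking these correspondences down a nested sequence of cones, one deduces that the restriction of $h$ to a sufficiently deep cone is a prefix replacement followed by a homeomorphism depending only on bounded local data, so that $h$ has only finitely many distinct sections and is induced by a transducer. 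Once $h$ is known to be rational, if $h$ or $h^{-1}$ failed to be synchronizing one could choose a prefix replacement $g \in \Gnr$ for which the local action of $h^{-1}gh$ varies within some cone, so that $h^{-1}gh$ is not a prefix-replacement map and hence not in $\Gnr$, contradicting $h^{-1}\Gnr h = \Gnr$; so both $h$ and $h^{-1}$ are synchronizing and $h \in \Bnr$. The main obstacle is the rationality step: extracting a genuine finite-state transducer from the purely dynamical hypothesis that $h$-conjugation preserves $\Gnr$ requires a uniform bound on the local behaviour of $h$, and establishing this bound is the technical heart of the argument of \cite{AutGnr}.
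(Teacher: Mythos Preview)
The paper does not prove this theorem at all: it is quoted verbatim as a result of \cite{AutGnr} and no argument is given in the present article. So there is no ``paper's own proof'' to compare your proposal against; the correct thing to do here is simply to cite \cite{AutGnr}, as the author does.

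That said, your outline is a fair high-level summary of the strategy in \cite{AutGnr}: the Rubin reduction to $N_{H(\CCnr)}(\Gnr)$, the easy inclusion $\Bnr \subseteq N_{H(\CCnr)}(\Gnr)$ via closure of the trivial-core property under conjugation by bi-synchronizing elements, and the hard reverse inclusion. You are also right that the crux is rationality of the normalising homeomorphism, and you are honest that you do not actually carry this out --- your final paragraph explicitly defers the ``technical heart'' to \cite{AutGnr}. As written, then, this is not a self-contained proof but a roadmap; the genuinely nontrivial content (bounding the number of local actions of $h$ from the hypothesis $h^{-1}\Gnr h = \Gnr$, and then deducing synchronization) is precisely what is missing. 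If your intent is merely to record the statement for later use, a one-line citation suffices and the sketch is unnecessary; if your intent is to give an independent proof, the rationality and synchronization steps must be supplied in full.
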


\subsection{Actions on Cantor space and the circle}
Let $S_{r}$ be the circle corresponding to the interval $[0,r]$ with the end points identified. For $n \in \N_{2}$, we set $\Z[1/n]:= \{ \sfrac{a}{n^c} \mid a, c \in \Z\}$ the set of $n$-adic rationals.  Let $\T{N} \le H(S_{r})$ be the subgroup of orientation preserving elements which induce bijections from the $\Z[1/n] \cap [0,r)$ to itself. 

Let $\simeq$ be the equivalence relation on $\CCnr$ given by $\rho \simeq \delta$ if either there is a word $\nu \in \xnrs$ and $a \in \xn \backslash \{0\}$ or $ a \in \dotr \backslash \{\dot{0}\}$  such that $\rho= \nu a000\ldots$ and $\delta = \nu a-1n-1n-1n-1\ldots$ or $\rho = \dot{0}00\ldots$ and $\delta= \dot{{(r-1)}}n-1n-1\ldots$. 

Let $g \in \T{N}$, and let $x \in [0,r]$. We observe that $x$ has non-unique  $n$-ary expansion in $\CCnr$ precisely when $x \in \Z[1/n] \cap  (0,r)$. In this case the $n$-ary expansions of $x$ take the form $\mu a000\ldots$ and $\delta = \mu a-1n-1n-1n-1\ldots$ for some $\mu \in \xnrs$ and $a \in \xn \backslash \{0\}$ or $ a \in \dot{r} \backslash \dot{0}$.  Let $\bar{x}_{1}, \bar{x}_{2} \in \CCnr$ be the $n$-ary expansions of $x$, such that $\bar{x}_{1} \lelex \bar{x}_{2}$ in the lexicographic ordering of $\CCnr$ induced by the natural order on the sets $\dotr$ and $\xn$ (set $\bar{x}_{1} = \bar{x}_{2}$ if $x$ has a unique $n$-ary expansion).  Observe that for $\nu \in \xnrs$ the clopen set $U_{\nu}$ corresponds to an  interval $[\sfrac{a}{n^c}, \sfrac{(a+1)}{n^c}] \subseteq [0,r]$ for some $a \in \N$ and $c \in \Z$. 
Let $\bar{g} : \CCnr \to \CCnr$ be defined by for $x \in [0,r]$, and $a = 1,2$, $(\bar{x}_{a})\bar{g} = \overline{(x)g}_{a}$. As elements  of $\T{N}$ map $\Z[1/n]\cap [0,r]$ bijectively into itself, the map $\bar{g}$ is well-defined. Since every element of $\CCnr$ corresponds to $\bar{x}_{a}$, $a = 1,2$, for some $x \in [0,r]$, and since $g$ is a homeomorphism of $S_{r}$, one may deduce that $\bar{g}$ is a homeomorphism of $\CCnr$. More specifically the map $\iota: \T{N} \to H(\CCnr)$ by  $g \mapsto \bar{g}$ is an injective homomorphism.

We write $\T{N}$ again for the image of $\T{N}$ under $\iota$. We note that $\TBnr$ is a subgroup of $(\T{N})\iota$ and we likewise we do not distinguish between $\TBnr$ and $(\TBnr)\iota^{-1}$. Therefore, we dually veiw $\TBnr$ as acting on Cantor space and on the circle and move freely between these two points of views.

The following result is from \cite{OlukoyaAutTnr}:

\begin{Theorem}[O]
	$\aut{\Tnr} \cong \TBnr$.
\end{Theorem}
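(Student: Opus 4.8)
The plan is to follow the route used in \cite{AutGnr} for $\aut{\Gnr}\cong\Bnr$, together with the circular-order bookkeeping of Brin's analysis of $\aut{T}$ in \cite{MBrin2,MBrinFGuzman}: reduce $\aut{\Tnr}$ to a normaliser by Rubin's Theorem, show every normalising homeomorphism lies in $\TBnr$, and verify the reverse inclusion. For the reduction, I would first check that $\gen{\CCnr,\CTnr}$ (equivalently $\gen{S_r,\Tnr}$) satisfies the hypotheses of Theorem~\ref{Thereom:RubinCantor}. The space is compact Hausdorff with no isolated points, and since $\Tnr$ is flexible and acts highly transitively on $S_r$, for any point $x$ and open $U\ni x$ one can build --- exactly as in Proposition~\ref{prop:fullflexiblerubin}, but taking care that the witnessing homeomorphism also preserves the cyclic order so that it is an element of $\Tnr$ rather than merely of $\Gnr$ --- an element of $\Tnr$ supported in $U$ that moves $x$ to any prescribed sub-neighbourhood; hence $\{xg : g\in\Tnr,\ g\restriction_{\CCnr-U}=\id\}$ is dense in $U$. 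Rubin's Theorem then gives $\aut{\Tnr}\cong N_{H(\CCnr)}(\Tnr)$, and triviality of the centraliser of $\Tnr$ (the density argument of Lemma~\ref{Lemma:normaliserisautomorphism}) shows this isomorphism is realised by conjugation.

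The core of the argument is to show $N_{H(\CCnr)}(\Tnr)\subseteq\TBnr$. Let $h$ normalise $\Tnr$. First, $h$ preserves or reverses the standard circular order of $\CCnr$: conjugation by $h$ permutes the cyclic-order-preserving elements of $\Tnr$, and a circle homeomorphism conjugating orientation-preserving maps to orientation-preserving maps must itself preserve or reverse orientation. Second, $h$ preserves the distinguished set $D=\{\nu\xi^{\omega} : \nu\in\xnrp,\ \xi\in\{0,n-1\}\}$ of doubly-represented points, since $D$ is the $\Tnr$-invariant ``breakpoint'' set and is recoverable group-theoretically from $\Tnr$ (for instance as the set of points at which some element of $\Tnr$ exhibits a prescribed one-sided germ behaviour). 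Third, and crucially, $h$ is rational --- induced by a finite transducer --- and bi-synchronizing: running the germ analysis of \cite{MBrin2,MBrinFGuzman,AutGnr}, conjugation by $h$ sends the germ of $g\in\Tnr$ at $x$ to the germ of $g^{h}$ at $(x)h$, which forces the germ of $h$ to lie in a group intertwining $\Tnr$-germs, and one deduces that $h$ has only finitely many local actions on deep enough cylinders, with both $h$ and $h^{-1}$ yielding synchronizing transducers. By the description of $\Bnr=\aut{\Gnr}$ in \cite{AutGnr}, these three facts place $h$ in $\TBnr$. This third point is the main obstacle: the full-group shortcut of Section~\ref{Section:autautcantor} is not available because $\Tnr$ is not full, and $\Tnr$ contains far fewer elements of small support than $\Gnr$, so the germ constraints are weaker and must be handled with the transducer machinery of \cite{OlukoyaAutTnr}.

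For the reverse inclusion $\TBnr\subseteq N_{H(\CCnr)}(\Tnr)$, I would argue directly on transducers: if $t\in\CTnr$ is a prefix-replacement map composed with a cyclic shift of its replacement antichain and $h\in\TBnr$, then the transducer of $t^{h}$ is obtained by pre- and post-composing the trivial-core transducer of $t$ with the bi-synchronizing transducer of $h$ and its inverse; this product again has trivial core (bi-synchronicity behaves well under the relevant compositions, as established for $\Bnr$ in \cite{AutGnr}), it preserves or reverses the circular order since every factor does, and it preserves $D$. Hence $t^{h}\in\CTnr$ and $\Tnr^{h}=\Tnr$, completing the identification $\aut{\Tnr}\cong\TBnr$.
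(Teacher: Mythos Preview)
This theorem is not proved in the present paper at all: it is quoted as a result from \cite{OlukoyaAutTnr} (see the sentence ``The following result is from \cite{OlukoyaAutTnr}'' immediately preceding the statement), so there is no in-paper proof to compare your proposal against. Your outline is a plausible reconstruction of the strategy one expects in \cite{OlukoyaAutTnr}, modelled on the $\aut{\Gnr}\cong\Bnr$ argument of \cite{AutGnr} with additional circular-order constraints, and the overall architecture (Rubin reduction to the normaliser, then the two inclusions) is certainly correct.

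That said, your sketch of the hard inclusion $N_{H(\CCnr)}(\Tnr)\subseteq\TBnr$ is where the real content lies, and your third step --- deducing rationality and bi-synchronicity of $h$ from a germ analysis --- is stated at a level of generality that hides the actual difficulty. In \cite{AutGnr} the corresponding step for $\Gnr$ uses that $\Gnr$ is full, which gives a large supply of locally-supported elements whose conjugates pin down the local action of $h$ on every cylinder; for $\Tnr$ that supply is much thinner, and simply invoking ``the germ analysis of \cite{MBrin2,MBrinFGuzman,AutGnr}'' does not by itself explain why finitely many local actions suffice or why the resulting transducer is synchronizing. If you intend this as a genuine proof rather than a pointer to \cite{OlukoyaAutTnr}, that step needs to be unpacked: one must exhibit, for each long enough word $w\in\xns$, a specific element of $\Tnr$ (not merely of $\Gnr$) whose conjugate forces the state reached after reading $w$ to depend only on $w$, and this is where the cyclic-order constraint makes the construction delicate. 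Your reverse inclusion is fine in spirit but also elides a point: one must check that conjugating a cyclic prefix-replacement by a bi-synchronizing, order-preserving-or-reversing transducer yields a map that is again a \emph{cyclic} prefix-replacement, not merely one with trivial core (trivial core lands you in $\Gnr$, not automatically in $\CTnr$).
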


We write $\WTBnr$ for the subgroup of $\TBnr$ of elements which induce orientation preserving elements of $H(S_r)$.

We set $\TOnr$ for the set of cores of $\TBnr$ and $\WTOnr$ for the set of cores of elements of $\WTBnr$. There is a product on $\TOnr$ which makes it into a group with $\WTOnr$ an index two subgroup (see \cite{OlukoyaAutTnr, AutGnr}).

The following result is from \cite{OlukoyaAutTnr}:

\begin{Theorem}
	The group $\TOnr$ is isomorphic to $\out{\Tnr}$.
\end{Theorem}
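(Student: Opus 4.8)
The plan is to realise $\out{\Tnr}$ as the image of a ``pass to the core'' map on $\aut{\Tnr}$. Begin with the translation of the inner automorphisms. Since $\Tnr$ is a Rubin group on $S_r$ and its centraliser in $H(S_r)$ is trivial (by the argument of Lemma~\ref{Lemma:normaliserisautomorphism}, so in particular $Z(\Tnr)=1$), the isomorphism $\aut{\Tnr}\cong\TBnr$ of \cite{OlukoyaAutTnr} is induced by the conjugation action and carries $\mathrm{Inn}(\Tnr)$ isomorphically onto the copy of $\Tnr$ inside $\TBnr$, which is the subgroup $\CTnr$. Hence $\out{\Tnr}\cong\TBnr/\CTnr$, and it remains to build an isomorphism $\TBnr/\CTnr\cong\TOnr$.

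For this I would consider the map $\Phi\colon\TBnr\to\TOnr$ sending $g$ to $\core(g)$, the core of the unique state-minimal bi-synchronizing transducer representing $g$. Well-definedness is bookkeeping: the minimal transducer is an invariant of the homeomorphism and its core is an invariant of the minimal transducer; by the definition of $\TOnr$ the assignment lands in $\TOnr$ and hits all of $\TOnr$, so $\Phi$ is onto. That $\Phi$ is a homomorphism is the substantive point: by \cite{OlukoyaAutTnr,AutGnr} the group operation on $\TOnr$ is precisely the ``core of the composite'' operation, so that for $g,h\in\TBnr$ one has $\core(gh)=\core(g)\cdot\core(h)$; granting this, $\Phi(gh)=\Phi(g)\Phi(h)$. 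The reason the core of $gh$ depends only on the cores of $g$ and $h$ is that, once a sufficiently long prefix has been read, both transducers sit inside their cores, so the asymptotic local action of $gh$ — which is exactly what $\core(gh)$ records — is determined by the cores; the transient (non-core) states only alter the action of $g$, of $h$, and of $gh$ on a clopen set $\bigcup_i U_{u_i}$ with finitely many $u_i$, i.e.\ by a prefix-replacement map, i.e.\ by an element of $\Gnr$.

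It remains to compute $\ker\Phi$. As elements of $\Gnr$ are exactly those of $\Bnr$ with trivial core, $\CTnr\le\Gnr\le\ker\Phi$. Conversely, if $\core(g)$ is the single-state identity transducer then $g$ is the identity off some $\bigcup_i U_{u_i}$ and a prefix replacement on each $U_{u_i}$, so $g\in\Gnr$; as $g$ also lies in $\TBnr$ it preserves or reverses the circular order, and since a prefix-replacement homeomorphism is orientation-preserving on each basic interval, hence globally, $g$ in fact preserves the circular order and so lies in $\CTnr$. Therefore $\ker\Phi=\CTnr$, giving $\TBnr/\CTnr\cong\TOnr$, which together with the first paragraph yields $\out{\Tnr}\cong\TOnr$.

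I expect the real obstacle to be the homomorphism step: making rigorous that the core of a composite of bi-synchronizing transducers is a function of the two cores alone, and that this function is the group law already placed on $\TOnr$ in \cite{OlukoyaAutTnr,AutGnr}. Everything else — the passage from $\mathrm{Inn}(\Tnr)$ to $\CTnr$, well-definedness of cores, and the kernel computation — is comparatively formal once the definitions recalled in this section are available.
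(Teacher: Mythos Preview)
The paper does not give a proof of this theorem; it is quoted as a result from \cite{OlukoyaAutTnr}. Your outline is the expected argument and is correct: the isomorphism $\aut{\Tnr}\cong\TBnr$ is realised by conjugation and carries $\mathrm{Inn}(\Tnr)$ onto $\CTnr$, and the core map $\TBnr\to\TOnr$ is a surjection whose kernel is $\Gnr\cap\TBnr=\CTnr$, with the homomorphism property $\core(gh)=\core(g)\cdot\core(h)$ being exactly how the product on $\TOnr$ is defined in \cite{AutGnr,OlukoyaAutTnr}, as you correctly isolate. Your kernel step is also sound: trivial core forces membership in $\Gnr$, and an element of $\Gnr\cap\TBnr$ descends to a circle homeomorphism which is locally orientation-preserving on each basic interval and hence globally, so it lies in $\CTnr$.
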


\subsection{Automorphism tower of \texorpdfstring{$\Tnr$}{Lg}} \label{Section:AutomorphismtowerofTnr}

\begin{Definition}
	Let $G \le H(S_{r})$ and $D \subseteq S_{r}$. Then $G$  is said to act o-$k$-transitively if for every pair $x_1, x_2, \ldots, x_k$ and $y_1,y_2, \ldots, y_{k}$ of $k$-tuples of points in $D$ such that $x_1 < x_2 < \ldots < x_k$ and $y_1 < y_2 < \ldots < y_{k}$ on some closed interval of $S_{r}$ (order induced from the ordering on $\mathbb{R}$), then there is a $g \in G$ such that $(x_i)g = y_i$ for all $1 \le i \le k$.
\end{Definition}

\begin{Remark}
	The group $\Tnr$ acts o-$k$-transitively on the set $\Z[1/n] \cap  [0,r)$. Thus since $\Tnr \le \TBnr$, $\TBnr$ acts o-$k$-transitively on $\Z[1/n] \cap  [0,r)$. 
\end{Remark}

The following result is due to McCleary  and Rubin (\cite{SMcClearyMRubin}) and is an analogue of Theorem~\ref{Thereom:RubinCantor} for the circle.

\begin{Theorem}[Mcleary and Rubin]\label{Theorem:RubinCircle}
	Let $G$ be a group  acting on the circle $S_{r}$ by orientation preserving homeomorphisms. Assume that $G$ acts o-$3$-transitively on a dense subset of $S_{r}$. Then for each automorphism $\alpha$ of $G$, there is a unique element $h$ of $H(S_{r})$ such that $(f)\alpha = h^{-1}f h$ for every $f \in G$. 
\end{Theorem}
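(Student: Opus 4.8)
The plan is to separate uniqueness (which is elementary) from existence (which is the real content), and for existence to reduce to Rubin's Theorem~\ref{Thereom:RubinCantor} whenever $G$ is locally moving, appealing to the structure theory of o-$2$-transitive circle groups (\cite{SMcClearyMRubin}) for the remaining case.

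\emph{Uniqueness.} Suppose $h_1,h_2\in H(S_r)$ both realise $\alpha$, i.e. $(f)\alpha=h_i^{-1}fh_i$ for all $f\in G$. Then $k:=h_1h_2^{-1}$ commutes with every element of $G$. For any $z\in S_r$ and any $g\in G$ with $(z)g=z$ we get $((z)k)g=(z)(kg)=(z)(gk)=((z)g)k=(z)k$, so $g$ fixes $(z)k$; writing $G_z$ for the stabiliser of $z$ in $G$, this says $G_z\le G_{(z)k}$, and applying the same with $k^{-1}$ at the point $(z)k$ gives $G_z=G_{(z)k}$ for every $z$. Let $D$ be the dense set on which $G$ acts o-$3$-transitively, fix $x\in D$ and set $p:=(x)k$. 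If $p\ne x$, cut $S_r$ at $x$ to linearly order $S_r\setminus\{x\}$ and pick $w,w'\in D$ with $p<w$ and $w'<p$; by o-$2$-transitivity there is $g\in G$ with $(x)g=x$ and $(w)g=w'$. Since $G_x=G_p$ the element $g$ fixes $p$; being orientation preserving it then sends $\{z:z>p\}$ to itself, contradicting $(w)g=w'<p<w$. Hence $p=x$, so $k$ fixes $D$ pointwise and, $D$ being dense, $k=\id$; thus $h_1=h_2$.

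\emph{Existence (locally moving case).} Call $G$ \emph{locally moving} if for every nonempty open arc $U$ the rigid stabiliser $G_{(U)}:=\{g\in G: g\restriction_{S_r\setminus U}=\id\}$ is nontrivial. For the groups to which the theorem is applied in this paper this holds, and moreover the orbit $\{(x)g: g\in G_{(U)}\}$ is somewhere dense for every $x$ and every neighbourhood $U$ of $x$: indeed such $G$ contain $\Tnr$, and $\Tnr$ already contains, for any standard sub-arc $J$ with $x\in J\subseteq U$, elements supported in $J$ whose orbit of $x$ is all the $n$-adic rationals $\Z[1/n]$ lying in $J$ (hence dense in $J$). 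Granting this, the pair $\langle S_r,G\rangle$ meets all hypotheses of Theorem~\ref{Thereom:RubinCantor}: $S_r$ is Hausdorff, compact (hence locally compact) and has no isolated points, and the set displayed there is precisely the somewhere-dense orbit above. Applying Theorem~\ref{Thereom:RubinCantor} to $\langle S_r,G\rangle$, $\langle S_r,(G)\alpha\rangle$ and the isomorphism $\alpha$ produces a homeomorphism $h\colon S_r\to S_r$ with $(f)\alpha=h^{-1}fh$ for all $f\in G$; $h$ is merely required to lie in $H(S_r)$ and may reverse orientation, which is why no parity hypothesis on $\alpha$ appears.

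\emph{The remaining case, and the obstacle.} One must still treat groups that are o-$3$-transitive on a dense set but not locally moving --- the prototype being a group acting on $S_r$ as $\mathrm{PSL}_2(\mathbb{R})$ does on its boundary circle, where every nontrivial element has at most two fixed points, so no $G_{(U)}$ with $U$ proper is nontrivial and Rubin's density hypothesis fails. For these I would invoke the classification of o-$2$-transitive groups of orientation-preserving homeomorphisms of the circle from \cite{SMcClearyMRubin}: such $G$ are rigid enough that the point stabilisers $G_x$ ($x\in D$) are self-normalising maximal subgroups, the cyclic betweenness relation on the set $\{G_x: x\in D\}$ is definable purely from the group, and the homeomorphism realising $\alpha$ can be read off from the bijection of point stabilisers induced by $\alpha$. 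In either case the \textbf{main obstacle}, and the genuine content of the theorem, is the recovery of the topology (equivalently the cyclic order) from the abstract group: in the locally moving case this is Rubin's group-theoretic identification of the rigid stabilisers inside the subgroup lattice of $G$, and in the pathological case it is the definability of point stabilisers and of betweenness. Everything afterwards --- transporting this recovered structure through $\alpha$, assembling $h$, and verifying the conjugation identity --- is routine, and for the applications in the present article only the locally moving half of the argument is needed.
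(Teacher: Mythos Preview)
The paper does not supply a proof of this theorem; it is quoted verbatim as a result of McCleary and Rubin \cite{SMcClearyMRubin} and used as a black box. So there is nothing in the paper to compare your argument against.

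That said, a brief assessment of your sketch on its own terms. Your uniqueness argument is clean and correct: the key point that an orientation-preserving homeomorphism fixing two points of $S_r$ must preserve each of the two arcs between them is exactly what forces the contradiction. For existence, your reduction in the locally moving case to Theorem~\ref{Thereom:RubinCantor} is legitimate---Rubin's theorem is stated for general locally compact Hausdorff spaces without isolated points, and $S_r$ qualifies---and you are right that in the present paper only this case is ever invoked, since $\Tnr\le\TBnr$ supplies an abundance of small-support elements. The non--locally-moving case you defer entirely to \cite{SMcClearyMRubin}; that is honest, but it means your write-up is not self-contained precisely where the theorem has genuine content beyond Rubin's general result. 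If you wanted a full proof you would need to actually carry out the reconstruction of the cyclic order from the lattice of point stabilisers in the $\mathrm{PSL}_2$-like situation, which is the heart of the McCleary--Rubin paper.
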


As a corollary we have:

\begin{corollary}
	$\aut{\TBnr} \cong N_{H(S_{r})}(\TBnr)$.
\end{corollary}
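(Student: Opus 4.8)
The plan is to apply the McCleary–Rubin theorem (Theorem~\ref{Theorem:RubinCircle}) to the group $\WTBnr$ of orientation-preserving elements of $\TBnr$, and then bootstrap from $\WTBnr$ up to $\TBnr$. First I would verify the hypotheses of Theorem~\ref{Theorem:RubinCircle} for $\WTBnr$: by the Remark preceding that theorem, $\TBnr$ acts o-$k$-transitively on the dense set $\Z[1/n]\cap[0,r)$, and in particular o-$3$-transitively; since $\WTBnr$ has index two in $\TBnr$ and contains $\CTnr\cong\Tnr$ (all of whose elements are orientation preserving) which already acts o-$k$-transitively on $\Z[1/n]\cap[0,r)$, the same o-$3$-transitivity passes to $\WTBnr$. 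Hence for every $\alpha\in\aut{\WTBnr}$ there is a unique $h\in H(S_r)$ with $(f)\alpha = h^{-1}fh$ for all $f\in\WTBnr$; uniqueness here also gives that the centraliser of $\WTBnr$ in $H(S_r)$ is trivial, so conjugation gives an injection $N_{H(S_r)}(\WTBnr)\hookrightarrow\aut{\WTBnr}$, and the theorem says it is onto, yielding $\aut{\WTBnr}\cong N_{H(S_r)}(\WTBnr)$.

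Next I would upgrade this to $\TBnr$ itself. Since $\WTBnr$ is a characteristic subgroup of $\TBnr$ — it is precisely the set of orientation-preserving elements, a group-theoretic notion once one knows $\TBnr$ is realised faithfully on $S_r$, or alternatively one can identify $\WTBnr$ intrinsically, e.g. as the kernel of the sign/orientation homomorphism $\TBnr\to\Z/2\Z$ — any $\alpha\in\aut{\TBnr}$ restricts to an automorphism of $\WTBnr$, hence is realised by conjugation by some $h\in H(S_r)$ on $\WTBnr$. I would then argue that this same $h$ realises $\alpha$ on all of $\TBnr$: for $f\in\TBnr\setminus\WTBnr$, both $h^{-1}fh$ and $(f)\alpha$ induce the same automorphism of $\WTBnr$ (namely conjugation, restricted, composed with the inner piece), so $h^{-1}fh\cdot((f)\alpha)^{-1}$ centralises $\WTBnr$ in $H(S_r)$ and is therefore trivial by the uniqueness clause above. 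This shows $\aut{\TBnr}\hookrightarrow N_{H(S_r)}(\TBnr)$ by $\alpha\mapsto h$, and conversely every element of $N_{H(S_r)}(\TBnr)$ visibly induces an automorphism by conjugation, with the centraliser being trivial; so the map is an isomorphism $\aut{\TBnr}\cong N_{H(S_r)}(\TBnr)$.

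The main obstacle I anticipate is the passage from $\WTBnr$ to $\TBnr$ — specifically, checking carefully that $\WTBnr$ is genuinely characteristic in $\TBnr$ and that the conjugating homeomorphism found for the restricted automorphism extends to witness $\alpha$ on the orientation-reversing coset. The characteristic-subgroup point is where one must use structural information about $\TBnr$ (that the orientation homomorphism is canonical, equivalently that $\TBnr/\WTBnr\cong\Z/2\Z$ is the unique such quotient, or that $\WTBnr$ can be recovered as, say, the subgroup generated by commutators together with squares, or from the action); the extension point is a short uniqueness argument leveraging triviality of centralisers on $S_r$. Everything else — o-$3$-transitivity and the identification of $\aut{}$ with a normaliser — is essentially an application of the quoted theorem of McCleary and Rubin together with the Remark, mirroring the role Rubin's theorem (Theorem~\ref{Thereom:RubinCantor}) played for the full groups in the previous section.
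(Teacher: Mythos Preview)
The paper gives no proof here beyond the phrase ``As a corollary we have''; your proposal is considerably more careful. You correctly notice that Theorem~\ref{Theorem:RubinCircle}, as stated in the paper, requires the group to act by orientation-preserving homeomorphisms and so does not literally apply to $\TBnr$, which contains orientation-reversing elements. Passing to $\WTBnr$ and then bootstrapping to $\TBnr$ is the natural fix, and your extension argument --- that for $f$ in the reversing coset the element $((f)\alpha)(h^{-1}fh)^{-1}$ centralises $\WTBnr$ in $H(S_r)$ and is therefore trivial by the uniqueness clause of Theorem~\ref{Theorem:RubinCircle} --- is correct.

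The one obligation you rightly isolate is that $\WTBnr$ be characteristic in $\TBnr$; the paper does not address this. Your suggested approaches (showing it is the unique index-two subgroup, or that it is the subgroup generated by commutators together with squares) are plausible lines; either would need to be checked against the structure of $\TBnr$, but this is a verification rather than a conceptual gap. In short, your outline is sound and fills in a step the paper leaves implicit.
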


We show that for $h \in H(S_{r})$ such that $h^{-1}\TBnr h \subseteq \TBnr$, $h^{-1} \Tnr h \subseteq \Tnr$. We do this by characterising the germs of elements of $\TBnr$ at a fixed point.

We begin with the following lemma.

\begin{lemma}\label{Lemma:coincideonopenimpliescoreequal}
	Let $A_{q_0}, B_{p_0} \in \TBnr$ and suppose that there is an open subset $U$ of $\xn$ such that $h_{q_0} \restriction_{U} = h_{p_0}\restriction_{U}$. Then $\core(A_{q_0}) = \core(B_{p_0})$.
\end{lemma}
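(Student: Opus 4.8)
The plan is to exploit the defining property of synchronizing transducers: once you have processed sufficiently many letters, the active (current) state depends only on the word processed, not on the starting state. First I would fix $k$ large enough that both $A_{q_0}$ and $B_{p_0}$ (and, if needed, their inverses) are synchronizing at level $k$, so that there are synchronizing maps $\mathfrak{s}_A, \mathfrak{s}_B$ sending length-$k$ words to states of $\core(A_{q_0})$, $\core(B_{p_0})$ respectively. Since $U$ is open and non-empty, I can pick a basic clopen cylinder $U_\nu \subseteq U$ with $|\nu| \ge k$; write $\nu = \nu' \gamma$ with $|\gamma| = k$, or more conveniently just take $\nu$ itself of length $\ge k$ and note that the state of $A_{q_0}$ reached after reading $\nu$ is $(\gamma)\mathfrak{s}_A$ where $\gamma$ is the length-$k$ suffix of $\nu$ (and similarly for $B$).

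The key step is then the following. For $\rho \in \CCn$, the point $(\nu\rho)h_{q_0}$ is obtained by reading $\nu$ from $q_0$, emitting some fixed prefix and landing in state $s_A := (\gamma)\mathfrak{s}_A$, then feeding $\rho$ through the core state $s_A$; likewise $(\nu\rho)h_{p_0}$ feeds $\rho$ through $s_B := (\gamma)\mathfrak{s}_B$. Because $h_{q_0}$ and $h_{p_0}$ agree on all of $U_\nu$, the two resulting homeomorphisms of $\CCn$ induced by the core states $s_A$ and $s_B$ must agree up to the (fixed, finite) difference in the prefixes emitted while reading $\nu$; more precisely, the maps $\rho \mapsto (\nu\rho)h_{q_0}$ and $\rho \mapsto (\nu\rho)h_{p_0}$ coincide, and each is a prefix (namely the image of $\nu$) followed by the core map from $s_A$, resp. $s_B$. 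Hence the homeomorphisms of Cantor space induced by the core states $s_A$ and $s_B$ are equal. Since the core is strongly connected and a synchronizing core transducer is uniquely determined (up to state relabelling) by the collection of homeomorphisms induced by its states — one recovers every other core state by reading a word from $s_A$ and from $s_B$ and using synchronization again, together with the fact that distinct core states of a minimal/synchronizing transducer induce distinct homeomorphisms — I would conclude that the sub-transducers of $\core(A_{q_0})$ and $\core(B_{p_0})$ reachable from $s_A$, resp. $s_B$, are identical, and by strong connectedness these are the whole cores. Therefore $\core(A_{q_0}) = \core(B_{p_0})$.

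I expect the main obstacle to be the final bookkeeping step: turning ``the core state $s_A$ and the core state $s_B$ induce the same homeomorphism of $\CCn$'' into ``$\core(A_{q_0}) = \core(B_{p_0})$ as transducers''. This requires knowing that within $\TBnr$ (equivalently $\TOnr$) a core transducer is faithfully and minimally presented — i.e. two core states inducing the same map are equal, and the transition/output structure is recoverable from the induced maps on cylinders. This should be available from the results of \cite{AutGnr, OlukoyaAutTnr} on the structure of $\TOnr$ and of synchronizing cores (a strongly connected synchronizing transducer is minimal, so state equality is detected by the induced continuous maps); one then propagates equality from $s_A, s_B$ around the strongly connected graph by reading common words and invoking synchronization at level $k$ once more. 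The remaining parts — choosing $\nu$ inside $U$, tracking the finite emitted prefix while reading $\nu$, and matching up the residual core maps — are routine.
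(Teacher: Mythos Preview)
Your proposal is correct and follows the same line as the paper's proof, which is the one-sentence ``This follows from the fact that the core is strongly connected and $A_{q_0}$ and $B_{p_0}$ are minimal.'' You have simply unpacked what that sentence means: pick $\nu$ long enough to land in core states, use agreement on $U_\nu$ to identify the induced maps at those states, and propagate via strong connectedness and minimality. The bookkeeping you flag (matching the emitted prefixes so that $h_{s_A}=h_{s_B}$ rather than merely ``equal up to a finite shift'') is handled by the fact, established in \cite{AutGnr}, that each core state of a minimal bi-synchronizing transducer induces a homeomorphism of $\CCn$; surjectivity onto $\CCn$ forces $\lambda_A(\nu,q_0)=\lambda_B(\nu,p_0)$ and hence genuine equality of the state maps, after which minimality and strong connectedness finish the job exactly as you outline.
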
 
\begin{proof}
	This follows from the fact that the core is strongly connected and $A_{q_0}$ and $B_{p_0}$ are minimal.
\end{proof}

For $x \in S_{r}$ write $\WTOnr^{x}$ for the core of those elements of  $\WTBnr$ which fix the element $x$. We note that $\WTOnr^{x}$ is a group. If $x$ is of the form $\eta w^{\omega}$ for some $w \in \xnp$, then $\WTOnr^{x}$ is precisely the subgroup of $\WTOnr$ consisting of those elements $T \in \WTOnr$ for which the unique state $q \in Q_{T}$ satisfying $\pi_{T}(w,q) = q$ has $\lambda_{T}(w,q) = q$. We note, consequently that for $x \in S_{r} \cap \Z[1/n]$, $\WTOnr^{x} = \WTOnr$.

\begin{lemma}\label{Lemma:germs}
	Let $x \in S_{r}$ and write $\mathscr{TB}_{x,x}$ for the group of germs at $x$ of the elements of $\WTBnr$ and $\mathscr{T}_{x,x}$ for the group of germs at $x$ of elements of $\Tnr$. Then
	\begin{enumerate}
		\item For $x \in [0,r] \cap \Z[1/n]$, $\mathscr{TB}_{x,x} \cong \WTOnr \times \Z \times \Z$ and $1 \times \Z \times \Z \cong \mathscr{T}_{x,x}$;
		\item For $x \in [0,r] \cap (\mathbb{Q} \backslash \Z[1/n])$ $\mathscr{TB}_{x,x} \cong \WTOnr^{x} \times \Z$ and $1 \times \Z \cong \mathscr{T}_{x,x}$;
		\item For $x \in [0,r]$ is irrational  $\mathscr{TB}_{x,x} \cong \WTOnr^{x} $ and $1 \cong \mathscr{T}_{x,x}$.
	\end{enumerate}
	      
\end{lemma}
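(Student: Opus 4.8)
The plan is to compute each germ group by reading off the local data of elements of $\WTBnr$ at the point $x$, distinguishing the three cases according to the $n$-ary expansion of $x$. Recall that elements of $\WTBnr$ act on the circle $S_r$ by orientation-preserving homeomorphisms that are (on each side of $x$) eventually "affine with respect to the $n$-adic structure'', since they are induced by bi-synchronizing transducers. The germ of $g\in\WTBnr$ at $x$ is then determined by three pieces of data: the one-sided local behaviour of $g$ to the left of $x$, the one-sided local behaviour to the right of $x$, and the way these are glued (which is constrained since $g$ is a single homeomorphism fixing $x$, not two independent germs). The key structural fact is that the "eventual'' local behaviour on each side is exactly captured by the core of the transducer inducing $g$: by Lemma~\ref{Lemma:coincideonopenimpliescoreequal}, two elements of $\WTBnr$ agreeing on an open set have the same core, so the core is a germ invariant, and conversely the core together with finitely much "prefix'' data determines the germ.

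First I would treat the case $x\in[0,r]\cap\Z[1/n]$, i.e. $x$ is an $n$-adic rational (an interior breakpoint, or an endpoint of $S_r$). Here $x$ has two $n$-ary expansions, of the eventual forms $\mu a 0^{\omega}$ and $\mu (a-1)(n-1)^{\omega}$, so $x$ sits between a "$0^\omega$-tail'' on one side and an "$(n-1)^\omega$-tail'' on the other. The germ of $g$ at $x$ records: (i) the core $T\in\WTOnr$ of the transducer inducing $g$ — and since $x\in\Z[1/n]$ we noted $\WTOnr^{x}=\WTOnr$, so there is no constraint; (ii) an integer recording the one-sided germ on the left, measured as a "depth shift'' — informally, how far the local action near $0^\omega$ translates in the $n$-adic scale, which is a $\Z$-worth of data coming from the fact that the stabilizer of a point in $\Gnr$ restricted to one side gives a $\Z$ of germs (powers of the "zoom'' map $x\mapsto x/n$ local model); (iii) a second, independent integer for the right side near $(n-1)^\omega$. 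Independence of (ii) and (iii) from each other and from $T$ follows because $\Tnr\le\WTBnr$ already realizes all of $1\times\Z\times\Z$ (this is where the o-$k$-transitivity / the explicit prefix-replacement description of $\CTnr\cong\Tnr$ is used: one exhibits elements of $\Tnr$ fixing $x$ with prescribed one-sided slopes), and $\WTOnr$ is realized by core-fixed-point elements with trivial prefix data. This simultaneously proves the $\mathscr{TB}_{x,x}$ computation and the identification $1\times\Z\times\Z\cong\mathscr{T}_{x,x}$, since an element of $\Tnr$ has trivial core.

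For the second case, $x\in[0,r]\cap(\mathbb{Q}\setminus\Z[1/n])$: now $x$ has a unique $n$-ary expansion which is eventually periodic, of the form $\eta w^{\omega}$ with $w\in\xnp$. There is only one side-type of tail (the same periodic tail $w^{\omega}$ approached from both sides), so the "two independent $\Z$'s'' collapse to a single $\Z$; and the core-level constraint is now genuine: the unique state $q$ with $\pi_T(w,q)=q$ must satisfy $\lambda_T(w,q)=q$ for $g$ to fix $x$, which is precisely the definition of $\WTOnr^{x}$. Hence $\mathscr{TB}_{x,x}\cong\WTOnr^{x}\times\Z$, and again intersecting with $\Tnr$ kills the core factor, leaving $1\times\Z\cong\mathscr{T}_{x,x}$. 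The third case, $x$ irrational, is the cleanest: the $n$-ary expansion of $x$ is aperiodic, so there is no periodic tail to translate along and the $\Z$ disappears entirely, leaving only the core data constrained to fix $x$, i.e. $\mathscr{TB}_{x,x}\cong\WTOnr^{x}$ and $\mathscr{T}_{x,x}=1$. In each case one must check two things: that the stated data genuinely is a complete germ invariant (injectivity of germ $\mapsto$ data — this is where Lemma~\ref{Lemma:coincideonopenimpliescoreequal} plus a finite-prefix argument enters), and that every value of the data is realized (surjectivity — via explicit elements of $\Tnr$ for the $\Z$-factors and core-fixing transducers for the $\WTOnr^{x}$-factor).

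The main obstacle I expect is the bookkeeping in the $n$-adic-rational case: pinning down precisely why the two one-sided germs contribute \emph{independent} copies of $\Z$ and why there is no further relation between them (for a circle homeomorphism fixing a point, the left and right germs are a priori linked), and matching the "translation depth'' integers to an honest group homomorphism onto $\Z\times\Z$. This requires being careful about what "germ'' means here — germs of transducer-homeomorphisms are finer than germs of $C^1$ diffeomorphisms, since slope alone is not the invariant — and about how the bi-synchronizing condition forces the local action to stabilize. Once the right notion of one-sided local invariant is fixed (core of the one-sided restriction, together with a depth/offset in $\Z$), the three cases are parallel and the computations are routine.
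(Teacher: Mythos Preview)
Your plan is correct and follows essentially the same route as the paper: in each case the germ is encoded by the core of the transducer (well-defined by Lemma~\ref{Lemma:coincideonopenimpliescoreequal}) together with, at an $n$-adic rational, two independent ``depth-shift'' integers coming from the $0^\omega$- and $(n-1)^\omega$-tails, collapsing to one integer at a non-$n$-adic rational and to nothing at an irrational, with surjectivity on the $\Z$-factors witnessed by explicit elements of $\Tnr$. Two small corrections: in your case-2 constraint you want $\lambda_T(w,q)=w$ (not $=q$), and your worry that left and right one-sided germs at a fixed point of an orientation-preserving circle homeomorphism might be ``a priori linked'' is unfounded---they are genuinely independent, which is exactly why the $n$-adic case yields $\Z\times\Z$.
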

\begin{proof}
	Suppose that $a \in S_{r} \cap \Z[1/n]$. Then there are $\tau \lelex \tau' \in \xnrp$ such that $\tau(n-1)^{\omega}, \tau' 0^{\omega}$ are the $n$-adic expansions of $a$. We note that there is an $j \in \N$ such that $U_{\tau' 0^{j}}$ corresponds to a subset $[a, b]$ of $[a, a+ \delta)$ and $U_{\tau (n-1)^{j}}$ corresponds to a subset $[b', a]$ of $(a- \delta, a]$.
	
	Let $f,g \in \TBnr$ both fix the point $a$ and let $\delta > 0$ be such that $f$ agrees with $g$ on $(a-\delta, a+ \delta)$ so that $f,g$ represent the same element of $\mathscr{TB}_{x,x}$.
	
	 Let $A_{q_0}$ and $B_{p_0}$ be the transducers corresponding to the elements $f,g \in \TBnr$. Since $f,g$ agree on $[a, a+ \delta)$. There is a neighbourhood of $\nu'$ on which $A_{q_0}$ and $B_{p_0}$ agree. In particular it follows that $A_{q_0}$ and $B_{p_0}$ have the same core.  Let $T = \core(A_{q_0}) = \core(B_{p_0}) \in \WTOnr$.

	Now as $A_{q_0}$ and $B_{p_0}$ are both synchronizing there is a $k' \in \N_{j}$ such that $\pi_{A}(\tau'0^{k'}, q_0) = \pi_{B}(\tau' 0^{k'} p_0) = p_{l(0)}$, the unique state of $T$ with $\pi_{T}(0, p_{l(0)}) = p_{l(0)}$. Since $A_{q_0}$ and $B_{p_0}$ agree on $U_{\tau' 0^{j}}$ we have, $\lambda_{A}(\tau'0^{k'}, q_0) = \lambda_{B}(\tau' 0^{k'}, p_0)$.
	
	In a similar way we conclude that there is a $k \in \N_{j}$ such that $\pi_{A}(\tau(n-1)^{k}, q_0) = \pi_{B}(\tau (n-1)^{k} p_0)=p_{l(n-1)}$, the unique state of $T$ with $\pi_{T}(n-1, p_{l(n-1)}) = p_{l(n-1)}$ and $\lambda_{A}(\tau(n-1)^{k}, q_0) = \lambda_{B}(\tau' (n-1)^{k}, p_0)$.
	
	Given two elements $A_{q_0}, B_{p_0} \in \WTBnr$ such that $\core(A_{q_0}) = \core(B_{p_0}) = T \in \WTOn$ and there are $k, k' \in \N$ so that 
	
	\begin{itemize}
	\item $\pi_{A}(\tau'0^{k'}, q_0) = \pi_{B}(\tau' 0^{k'} p_0) = p_{l(0)}$, the unique state of $T$ with $\pi_{T}(0, p_{l(0)}) = p_{l(0)}$, and  $\tau' \le \lambda_{A}(\tau'0^{k'}, q_0) = \lambda_{B}(\tau' 0^{k'}, p_0)$;
	\item  $\pi_{A}(\tau(n-1)^{k}, q_0) = \pi_{B}(\tau (n-1)^{k}, p_0)$, the unique state of $T$ with $\pi_{T}(n-1, p_{l(n-1)}) = p_{l(n-1)}$, and $\tau \le \lambda_{A}(\tau(n-1)^{k}, q_0) = \lambda_{B}(\tau' (n-1)^{k}, p_0)$.	
   \end{itemize}
   Then $A_{q_0}, B_{p_0}$ coincide on the neighbourhoods $U_{\tau(n-1)^{k}}$ and $U_{\tau'0^{k'}}$. In particular the maps $h_{q_0}, h_{p_0}$, as maps of $S_{r}$ represent the same element of $\mathscr{TB}_{a,a}$.
   
   Let $A_{q_0}$ and $B_{p_0}$ represent the same element of $\mathscr{TB}_{a,a}$ and let $T = \core(A_{q_0}) = \core(B_{p_0})$. Let $i,j \in \N$ be minimal such that $\pi_{A}(\tau'0^{i}, q_{0}) = p_{l(0)}$ and $\pi_{B}(\tau'0^{j}, p_{0}) = p_{l(0)}$. Notice that since $\lambda_{T}(0, p_{l(0)}) =  0$, then $e_{A_{q_0}}:=\lambda_{A}(\tau'0^{i}, q_{0}) - \tau'0^{i} = \lambda_{B}(\tau'0^{j}, p_{0}) - \tau'0^{j}$. In a similar way for $l, m\in \N$ be minimal such that $\pi_{A}(\tau (n-1)^{l}, q_{0}) = p_{l(n-1}$ and $\pi_{B}(\tau (n-1)^{m}, p_{0}) = p_{l(n-1)}$,  $\lambda_{T}(n-1, p_{l(n-1)}) = n-1$, then $d_{A_{q_0}}:=\lambda_{A}(\tau (n-1)^{i}, q_{0}) - \tau (n-1)^{l} = \lambda_{B}(\tau(n-1)^{m}, p_{0}) - \tau(n-1)^{m}$.    
   
   Represent the elements of $\mathscr{TB}_{x,x}$ corresponding to $A_{p_0}$ by the pair $(T, d_{A_{q_0}}, e_{A_{q_0}})$. It is not hard to verify that for $D_{t_0} \in \TBnr$, with $\core(D_{t_0}) = V$, the element $E_{s_0}$ corresponding to the product $(AD)_{(p_0, t_0)}$ has core $TV$, $d_{E_{s_0}} = d_{A_{q_0}} + d_{D_{t_0}}$ and $e_{ E_{s_0}} = e_{A_{q_0}} + e_{D_{t_0}}$.
   
   Now given an element $A_{p_0} \in \WTBnr$ such that the germ at $a$ of $A_{p_0}$ is $(T, d_{A_{p_0}}, e_{A_{p_0}})$, for any pair $(d', e') \in \Z \times \Z$, we may find an element of $g \in \Tnr$ such that the germ of $A_{p_0}g$ at $a$ is $(T, d', e')$. Thus we see that the map  from $\mathscr{TB}_{a,a}$  to $\WTOnr \times \Z \times \Z$ sending the class of an element $A_{p_0} $ to the tuple $(\core(A_{p_0}), d_{A_{p_0}}, e_{A_{p_0}} )$ is an isomorphism. Moreover, the subgroup $1 \times \Z \times \Z$ corresponds to germs at $a$ of the elements of $\Tnr$ fixing the point $0 \in S_{r}$.
   
   Now suppose that $a$ is a rational element of $S_{r} \backslash \Z[1/n]$. In this case there are words $\tau \in \xnrp$ and $w \in \xns$ such that $a = \tau w^{\omega}$. For an element $T \in \WTOn$ write $p_{l(w)}$ for the unique state of $T$ such that $\pi_{T}(w, p_{l(w)}) = w$.
   
   By similar arguments to above, in this case we see that two elements $A_{p_0}$ and $B_{q_0}$ correspond to the same elements of $\mathscr{TB}_{a,a}$ precisely when $\core(B_{q_0}) = \core(A_{p_0}) = T$ and for $k \in \N$ such that  $\pi_{A}(\tau w^{k}, p_0) = \pi_{B}(\tau w^{k}, q_0) = p_{l(w)}$, we have, $\lambda_{A}(\tau w^{k}, p_0) = \lambda_{B}(\tau w^{k}, q_0)$. 
   
   Let $A_{p_0} \in \WTBnr$, let $T = \core(A_{p_0})$ in $\TBnr$ and let $k \in \N$ be such that $\pi_{A}(\tau w^{k}, p_0) = p_{l(w)}$. Set $d_{A_{p_0}}$ to be the value $\tau w^{k} - \lambda_{A}(\tau w^{k}, p_0)$. We note that since $\lambda_{T}(w, p_{l(w)}) = w$, then for any element $B_{q_0}$ representing the same element as $A$ in $\mathscr{TB}_{a,a}$, $d_{B_{q_0}} = d_{A_{p_0}}$. The map which sends the element of $\mathscr{TB}_{a,a}$ represented by $A_{p_0}$ to the element $(\core(A_{p_0}), d_{A_{p_0}})$ is an isomorphism from $\mathscr{TB}_{a,a}$ to $\WTOnr^{a} \times \Z$. Moreover the subgroup $1 \times \Z$ corresponds to germs at $a$ of the elements of $\Tnr$ fixing the point $a$.
   
   Lastly suppose $a \in S_{r}$ is irrational and let $\eta \in \xnro$ be the element corresponding to $a$. 
   
   Similar arguments show that two elements  $A_{p_0}$ and $B_{p_0}$ have the same germ at $a$ precisely when $\core(A_{p_0}) = \core(B_{q_0}) = T$. For, as $a$ is irrational, $(a)A_{p_0} = (a)B_{q_0}$, and $A_{p_0}$ and $B_{q_0}$ are minimal, it must be the case  that for a long enough prefix $\eta_1$ of $\eta$ such that $\pi_{A}(\eta_1, p_0) = \pi_{B}(\eta_1, q_0) \in Q_{T}$, $\lambda_{A}(\eta, p_0) = \lambda_{B}(\eta, q_0)$. In this case, all elements of $\Tnr$ fixing the point $a$ belong to the germ of the identity element.
\end{proof}

In the corollary below we show that any element $h \in H(S_{r})$ satisfying $h^{-1}\TBnr h \subseteq \TBnr$ must induce a map from the subgroup $\mathscr{T}_{x,x}$ of $\mathscr{TB}_{x,x}$ to the subgroup $\mathscr{T}_{(x)h,(x)h}$ of $\mathscr{TB}_{(x)h,(x)h}$. From this it  follows, since $\Tnr$ is generated by its elements of small support, that $h^{-1}\Tnr h \subseteq \Tnr$.

\begin{corollary}\label{cor:normaliserpresevesTnr}
	Let $h \in H(S_r)$ be such that $h^{-1}(\TBnr)h \subseteq \TBnr$. Then, $h^{-1} \Tnr h \subseteq \Tnr$. 
\end{corollary}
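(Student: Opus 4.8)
The plan is to show that conjugation by $h$ preserves, at every point $x \in S_r$, the germ subgroup of $\Tnr$ inside the germ group of $\TBnr$, and then to leverage the fact that $\Tnr$ is generated by its elements of small support. First I would fix $h \in H(S_r)$ with $h^{-1}\TBnr h \subseteq \TBnr$; since conjugation is an injective homomorphism of $H(S_r)$ that carries $\TBnr$ into itself, for each $x \in S_r$ it induces a homomorphism $c_h\colon \mathscr{TB}_{x,x} \to \mathscr{TB}_{(x)h,(x)h}$ between the germ groups, because germs are preserved by conjugation (if $f$ and $g$ agree near $x$ then $h^{-1}fh$ and $h^{-1}gh$ agree near $(x)h$). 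The crucial claim is that $c_h$ maps the subgroup $\mathscr{T}_{x,x}$ into $\mathscr{T}_{(x)h,(x)h}$. Granting this claim, let $g \in \Tnr$ be an element of small support; then $g^h := h^{-1}gh \in \TBnr$, and at every point $y \in S_r$ the germ of $g^h$ at $y$ lies in $\mathscr{T}_{y,y}$ — indeed if $y \notin (\operatorname{supp} g)h$ then $g$ is trivial near $(y)h^{-1}$ so $g^h$ is trivial near $y$, and if $y \in (\operatorname{supp} g)h$ then the germ of $g$ at $(y)h^{-1}$ lies in $\mathscr{T}_{(y)h^{-1},(y)h^{-1}}$ and $c_h$ carries it into $\mathscr{T}_{y,y}$. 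An element of $\TBnr$ all of whose germs lie in the germ groups of $\Tnr$ is itself in $\Tnr$: locally it agrees with elements of $\Tnr$, and since $\Tnr \cong \CTnr$ is full within the appropriate sense (prefix-replacement maps patch together), $g^h \in \Tnr$. As $\Tnr$ is generated by elements of small support by the argument of Lemma~\ref{Lemma:Tnrgeneratedbyelementsofsmallsupport}, we conclude $h^{-1}\Tnr h \subseteq \Tnr$.

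To prove the crucial claim I would use the explicit descriptions from Lemma~\ref{Lemma:germs}. The key structural fact is that $\mathscr{T}_{x,x}$ is, in each of the three cases, precisely the subgroup of $\mathscr{TB}_{x,x}$ consisting of germs with trivial ``core part'': it is $1 \times \Z \times \Z$, $1 \times \Z$, or $1$ respectively, the kernel of the natural projection $\mathscr{TB}_{x,x} \twoheadrightarrow \WTOnr$ (resp. $\WTOnr^x$, resp. trivially). So it suffices to show $c_h$ is compatible with these core-projections, i.e. that an element of $\TBnr$ whose germ at $x$ has trivial core also has germ at $(x)h$ with trivial core after conjugation. Equivalently: if $f \in \TBnr$ is a prefix-replacement map near $x$ (a local homeomorphism agreeing with some element of $\Gnr$ in a neighbourhood), then $h^{-1}fh$ is again locally a prefix-replacement map near $(x)h$. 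The cleanest route is an intrinsic characterisation of ``having trivial core at $x$'': a germ at $x$ lies in $\ker(\mathscr{TB}_{x,x} \to \WTOnr)$ iff it is represented by a homeomorphism that is \emph{locally affine} in $n$-adic coordinates near $x$ — i.e. maps some $U_\nu \ni x$ to some $U_{\nu'}$ by a prefix replacement $\nu\rho \mapsto \nu'\rho$. This property is manifestly conjugation-invariant provided $h$ itself is a homeomorphism conjugating $\TBnr$ into $\TBnr$, because the image of a basic clopen set under $h$ is again (up to refining) a basic clopen set: here one uses that $h$ normalises $\TBnr$ and hence, by the same germ analysis applied to $h^{-1}\Gnr h$-type considerations, $h$ sends $U_\nu$'s to finite unions of $U_{\nu'}$'s.

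The main obstacle, and the place requiring real care, is precisely this last point: establishing that conjugation by $h$ sends locally-affine germs to locally-affine germs, without circularity. The subtlety is that a priori $h$ need only normalise $\TBnr$, not $\Tnr$ — that is what we are trying to prove — so one cannot directly invoke that $h$ maps $U_\nu$ to a union of $U_{\nu'}$'s. The resolution I would pursue: work purely at the level of germ groups. By Lemma~\ref{Lemma:germs}, $\mathscr{T}_{x,x}$ is canonically characterised inside $\mathscr{TB}_{x,x}$ as a normal subgroup — in fact as the kernel of the projection onto $\WTOnr$ (or $\WTOnr^x$), and this projection has an abstract group-theoretic meaning: $\mathscr{TB}_{x,x}/\mathscr{T}_{x,x}$ is the group of cores, a torsion-containing group, whereas $\mathscr{T}_{x,x}$ is free abelian of rank $\le 2$. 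Since $c_h$ is a group isomorphism $\mathscr{TB}_{x,x} \to \mathscr{TB}_{(x)h,(x)h}$, it must carry the unique maximal free-abelian normal subgroup (or more robustly, the subgroup generated by all elements lying in free-abelian normal subgroups) of the source to that of the target — and these are exactly $\mathscr{T}_{x,x}$ and $\mathscr{T}_{(x)h,(x)h}$. This reduces the whole corollary to a finitary check that the algebraic structure of $\mathscr{TB}_{x,x}$ from Lemma~\ref{Lemma:germs} pins down $\mathscr{T}_{x,x}$ intrinsically, which is routine given the explicit product formulas $(T,d,e)\cdot(V,d',e') = (TV, d+d', e+e')$ already recorded in the proof of that lemma.
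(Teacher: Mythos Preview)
Your overall strategy --- use the germ computations of Lemma~\ref{Lemma:germs}, show that conjugation by $h$ respects the ``trivial-core'' subgroup of germs, and finish via small-support generation --- matches the paper's. The gap is in your final paragraph, where the real work happens.

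You propose to identify $\mathscr{T}_{x,x}$ inside $\mathscr{TB}_{x,x}$ purely group-theoretically, as the unique maximal free-abelian normal subgroup (or the subgroup generated by all such). This is not justified. For $x$ $n$-adic we have $\mathscr{TB}_{x,x}\cong \WTOnr\times\Z\times\Z$, and nothing in the paper rules out $\WTOnr$ having a nontrivial free-abelian normal subgroup $A$ (say, a central infinite cyclic); then $A\times\Z\times\Z$ would strictly contain $1\times\Z\times\Z$. Likewise for $x$ irrational you would need $\WTOnr^{x}$ to have \emph{no} nontrivial free-abelian normal subgroup. These facts about $\out{\Tnr}$ are not established here, so ``a torsion-containing group'' is not enough. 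A second, smaller issue: under the stated hypothesis $h^{-1}\TBnr h\subseteq\TBnr$ you only get that $c_h$ is injective, not that it is an isomorphism, so even a correct intrinsic characterisation would not obviously be transported.

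The paper bypasses any intrinsic characterisation by exploiting a global feature of the core rather than the local algebra of a single germ group. The key is Lemma~\ref{Lemma:coincideonopenimpliescoreequal}: two elements of $\TBnr$ agreeing on \emph{any} open set have equal cores. Fix an auxiliary rational non-$n$-adic point $x=a\nu^{\omega}$. For any element $g\in\Tnr$ fixing an $n$-adic point $y$, one first builds $f\in\Tnr$ with the same germ as $g$ at $y$ but with $f$ equal to the identity near $x$. Then $f$ and $\id$ share a germ at $x$, so $f^{h}$ and $\id$ share a germ at $(x)h$; by Lemma~\ref{Lemma:coincideonopenimpliescoreequal} this forces $\core(f^{h})$ to be trivial, whence $f^{h}\in\Tnr$. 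Now $g$ and $f$ share a germ at $y$, so $g^{h}$ and $f^{h}$ share a germ at $(y)h$, and the same lemma gives $\core(g^{h})=\core(f^{h})=1$, i.e.\ $g^{h}\in\Tnr$. This two-point trick --- one point to kill the core, another to match the desired germ --- is the missing idea; it uses only the germ \emph{equalities} (which conjugation transports for free) and the global core lemma, never an abstract description of $\WTOnr$.
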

\begin{proof}
 
 Let  $\nu \in \xn^{p}$ such that $\nu \ne \xi ^{i}$ for some $\xi \in \{0, n-1\}$ and $i \in \N$. Let $a \in \dotr$ and consider the word $a \nu^{\omega}$. Note that the word $a \nu^\omega$ corresponds to a rational element $x$ of $S_{r}$. 
 
 We begin with the following observation. 
 
 By choice of $\nu$, for any pair $i,j \in \Z$ and any $\tau, \tau' \in \xnrp$ with $\tau 0^{\omega} \simeq \tau'(n-1)^{\omega}$, we may find an element $f \in  \Tnr$ which fixes $x$ and which has the following properties. In the action of $f$ on $\CCnr$  there are $k,l,m, \in \N$, $k, l \ge  \max\{|i|, |j|\}$ such that the following holds:
 \begin{itemize}
 	\item $U_{\tau0^{k}}f = U_{\tau0^{k+i}}$ and $U_{\tau'(n-1)^{l}}f = U_{\tau'(n-1)^{l+j}}$;
 	\item $f\restriction_{U_{a\nu^{m}}} =\id \restriction_{U_{a\nu^{m}}}$
 \end{itemize}  
 
  Let $A_{q_0} \in \TBnr$ be an element which fixes $x$. Then the germ of $A_{q_0}$ at $x$ is equal to the germ of $A_{q_0}f$ at $x$. In particular the germ of $A_{q_0}^{h}$ at $(x)h$ and $(A_{q_0}f)^{h}$ at $(x)h$ coincide. From this it follows that $A_{q_0}^{h}$ and $A_{q_0}^{h}f^{h}$ have the same core. Since $\core(A_{q_0}^{h}f^{h}) = \core(A_{q_0}^{h})\core(f^{h})$, and $\TOnr$ is a group, this is true precisely when $f^{h}$ has trivial core. Therefore we see that $f^{h} \in \Tnr$.

 Let $g \in \Tnr$ be any element that fixes a point $y \in S_{r} \cap \Z[1/n]$. Let $\tau, \tau' \in \xnrp$ be such that $\tau 0^{\omega} = \tau (n-1)^{\omega}$ are the distinct $n$-adic expansions of $y$. By Lemma~\ref{Lemma:germs}, the germ of $g$ at $y$ corresponds to a pair $(i,j) \in \Z \times \Z$. By the preceding paragraph we may find an element $f \in \Tnr$, such that $h^{-1}f h \in \Tnr$ and the germ of $f$ at $y$ coincides with the germ of $g$ at $y$. This means that the germs of $g^{h}$ and $f^{h}$ at $(y)h$ coincide. Therefore $g^{h}$ must again be an element of $\Tnr$, since it has trivial core.
 
 We next observe that it is a standard result that $\Tnr$ is generated by its elements of small support. Moreover, any element of $\Tnr$ of small support must fix some $n$-adic rational point. Thus, for any element $g \in \Tnr$ of small support $h^{-1}gh \in \Tnr$. From this we deduce that $h^{-1}\Tnr h \subseteq \Tnr$ as required.
\end{proof}

\begin{Theorem}
	$\aut{\aut{\Tnr}} = \aut{\Tnr}$.
\end{Theorem}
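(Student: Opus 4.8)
The plan is to run the argument of Section~\ref{Section:autautcantor} with the circle $S_{r}$ in place of Cantor space, using Theorem~\ref{Theorem:RubinCircle} (McCleary--Rubin) where that section uses Rubin's theorem and Corollary~\ref{cor:normaliserpresevesTnr} where it uses Lemma~\ref{Lemma:endomorphismsofautfullgroups}. I would begin by recording two facts already available. First, by \cite{OlukoyaAutTnr} we have $\aut{\Tnr}\cong\TBnr$, and this isomorphism is realised by the conjugation action of $\TBnr\le H(S_{r})$ on $\Tnr$. Secondly, since $\Tnr$ acts o-$3$-transitively on the dense subset $\Z[1/n]\cap[0,r)$ of $S_{r}$ by orientation-preserving homeomorphisms, Theorem~\ref{Theorem:RubinCircle} applies to $\Tnr$; its uniqueness clause forces the centraliser of $\Tnr$ in $H(S_{r})$ to be trivial, and it follows that conjugation induces an isomorphism $N_{H(S_{r})}(\Tnr)\to\aut{\Tnr}$. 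Comparing this with the realisation of the previous sentence and using $\TBnr\le N_{H(S_{r})}(\Tnr)$, I conclude $N_{H(S_{r})}(\Tnr)=\TBnr$ as subgroups of $H(S_{r})$.

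Next I would invoke the corollary to Theorem~\ref{Theorem:RubinCircle} recorded above, namely that conjugation induces an isomorphism $\aut{\TBnr}\cong N_{H(S_{r})}(\TBnr)$, so that $\aut{\aut{\Tnr}}\cong N_{H(S_{r})}(\TBnr)$. The heart of the argument is then to prove $N_{H(S_{r})}(\TBnr)=\TBnr$; one inclusion is automatic, and for the other let $h\in N_{H(S_{r})}(\TBnr)$. Then $h^{-1}\TBnr h\subseteq\TBnr$, and applying Corollary~\ref{cor:normaliserpresevesTnr} with $h^{-1}$ in place of $h$ we also have $h\TBnr h^{-1}\subseteq\TBnr$; hence Corollary~\ref{cor:normaliserpresevesTnr} yields both $h^{-1}\Tnr h\subseteq\Tnr$ and $h\Tnr h^{-1}\subseteq\Tnr$, so $h^{-1}\Tnr h=\Tnr$. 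Thus $h\in N_{H(S_{r})}(\Tnr)=\TBnr$, as required.

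Finally I would deduce the statement by the same reasoning that, in Section~\ref{Section:autautcantor}, derives $\aut{\aut{G}}=\aut{G}$ from Theorem~\ref{Theorem:fullflexibleexpansion}. Under the identifications $\aut{\Tnr}=\TBnr$ and $\aut{\aut{\Tnr}}=\aut{\TBnr}=N_{H(S_{r})}(\TBnr)$ (both realised by conjugation), the canonical map $\aut{\Tnr}\to\aut{\aut{\Tnr}}$ taking an element to conjugation by it corresponds to the inclusion $\TBnr\hookrightarrow N_{H(S_{r})}(\TBnr)$, which is an equality by the previous paragraph and is injective because $\TBnr$ has trivial centraliser in $H(S_{r})$ (it contains $\Tnr$). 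Hence the canonical map is an isomorphism, that is, $\aut{\aut{\Tnr}}=\aut{\Tnr}$.

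The genuinely substantial work is already carried out in Lemma~\ref{Lemma:germs} and Corollary~\ref{cor:normaliserpresevesTnr}, so the main obstacle in writing up the argument above is one of bookkeeping rather than mathematics: one must verify that the isomorphisms $\aut{\Tnr}\cong\TBnr$ and $\aut{\TBnr}\cong N_{H(S_{r})}(\TBnr)$ are realised by the concrete conjugation actions and that the relevant centralisers in $H(S_{r})$ are trivial, so that the equality of normalisers $N_{H(S_{r})}(\TBnr)=\TBnr$ really does upgrade to the equality $\aut{\aut{\Tnr}}=\aut{\Tnr}$ of successive levels of the automorphism tower, and not merely to an abstract isomorphism between them.
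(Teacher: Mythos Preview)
Your proposal is correct and follows essentially the same route as the paper: the paper's proof is the single line ``This follows from Corollary~\ref{cor:normaliserpresevesTnr}'', and you have unpacked exactly the implicit steps --- identifying $\aut{\Tnr}$ with $N_{H(S_{r})}(\Tnr)=\TBnr$ and $\aut{\TBnr}$ with $N_{H(S_{r})}(\TBnr)$ via McCleary--Rubin, then using Corollary~\ref{cor:normaliserpresevesTnr} (applied to both $h$ and $h^{-1}$) to force $N_{H(S_{r})}(\TBnr)\subseteq N_{H(S_{r})}(\Tnr)=\TBnr$. Your closing remarks about checking that the conjugation isomorphisms match the canonical tower maps are the only place you go beyond the paper, and that care is warranted.
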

\begin{proof}
	This follows from Corollary~\ref{cor:normaliserpresevesTnr}.
\end{proof}

\bibliographystyle{amsplain}
\bibliography{references}

%\printbibliography
\end{document}